\pgfplotsset{compat=1.18}
\numberwithin{equation}{section}
\newcommand{\field}[1]{\mathbb{#1}}
\newcommand{\Z}{\field{Z}}
\newcommand{\R}{\field{R}}
\newcommand{\C}{\field{C}}
\renewcommand{\H}{\field{H}}
\newcommand{\N}{\field{N}}
\newcommand{\p}{\partial}
\DeclareMathAlphabet{\mathscr}{U}{rsfs}{m}{n}
\def\cS{\mathscr{S}}
\def\mE{\mathcal{E}}
\def\mF{\mathcal{F}}
\def\mH{\mathcal{H}}
\def\mL{\mathcal{L}}
\def\mR{\mathcal{R}}
\newcommand\mS{\mathcal{S}}
\def\CP{\C P}
\def\HP{\H P}
\DeclareMathOperator{\End}{End}
\DeclareMathOperator{\Ker}{Ker}
\DeclareMathOperator{\Coker}{Coker}
\DeclareMathOperator{\Image}{Im}
\DeclareMathOperator{\Id}{Id}
\DeclareMathOperator{\Ind}{Ind}
\DeclareMathOperator{\vol}{vol}
\DeclareMathOperator{\ch}{ch}
\DeclareMathOperator{\pr}{pr}
\DeclareMathOperator{\dd}{d}
\DeclareMathOperator{\Cl}{Cl}
\DeclareMathOperator{\grad}{grad}
\newtheorem{thm}{Theorem}[section]
\newtheorem{lemma}[thm]{Lemma}
\newtheorem{prop}[thm]{Proposition}
\newtheorem{cor}[thm]{Corollary}
\theoremstyle{definition}
\newtheorem{rmk}[thm]{Remark}
\theoremstyle{definition}
\newtheorem{ex}[thm]{Example}
\theoremstyle{definition}
\newtheorem{defn}[thm]{Definition}
\theoremstyle{definition}
\newtheorem{conj}[thm]{Conjecture}
\newcommand{\be}{\begin{eqnarray}}
\newcommand{\ee}{\end{eqnarray}}
\newcommand{\ov}{\overline}
\newcommand{\comment}[1]{}
\begin{document}
	
\title{A Product Formula for Family Indices and Family Band Width Estimates}

\author{Chenkai Song}
\address{Chern Institute of Mathematics \& LPMC, Nankai
University, Tianjin 300071, P.R. China}
\email{songck@mail.nankai.edu.cn}

\begin{abstract}
	We extend Gromov's conjecture on the sharp width estimate for Riemannian bands with positive scalar curvature to the family case and prove that it holds for fiber bundles with infinite family $\widehat{A}$-area. The method we employ is based on Dirac operators and the family index theory. Our proof relies on a product formula for index bundles established in this paper.
\end{abstract}

\maketitle

%\tableofcontents

\setcounter{section}{-1}
\section{Introduction}
Manifolds of positive scalar curvature (PSC) have been an important topic in differential geometry. A basic observation is that if a closed manifold $Z$ admits a PSC metric, then the product manifold $Z\times\R$ also admits such a complete metric. In contrast, if $Z$ admits no metric of PSC, then although PSC metrics exist on products $Z\times I$ for short intervals $I\subset\R$ \cite[Section 2]{Gro2018}, one may not always be able to extend such metrics to arbitrarily long bands. Gromov formulated this observation quantitatively in the following conjecture.

\begin{conj}\label{BandWidthConj}{\cite[Section 11.12 Conjecture C]{Gro2018}}		
	Let $Z$ be a connected closed manifold that admits no PSC metric. Denote $n-1\coloneq\dim Z\geqslant5$. If $g$ is a metric on the Riemannian band $Z\times[-1, 1]$ with scalar curvature bounded from below by $\sigma>0$, then
    \begin{equation}\label{BandWithEstimateSingle}
        \mathrm{dist}_g\{Z\times\{-1\},Z\times\{1\}\}\leqslant2\pi\sqrt{\frac{n-1}{n\sigma}}.
    \end{equation}
\end{conj}

The condition $\dim Z\geqslant 5$ is imposed because the case $\dim Z=4$ admits counterexamples based on Seiberg--Witten invariants. See \cite[Counterexample 4.16]{RS2014}. For $\dim Z\leqslant 3$, the conjecture is known to hold by the results of \cite{Rade2023} and \cite{LiZhang2025}.

Gromov showed that the constant in the conjecture is sharp and verified Conjecture~\ref{BandWidthConj} for overtorical bands using geometric measure theory \cite{Gro2018}. In recent years, the Dirac operators and index theory have turned out to be useful tools for this problem. The conjecture has been established for $Z$ with non-vanishing Rosenberg index \cite{Zeid2022,Ce2020,Zeid2020}, for odd-dimensional bands with infinite vertical $\widehat{A}$-area \cite{CZ2024} and for $Z$ with non-vanishing higher index \cite{GXY2020,Yu2024}. The method of $\mu$-bubbles has also been applied to Conjecture \ref{BandWidthConj}. See, for instance, \cite[Section 5]{Gro2023} and \cite{Zhu2021,Rade2023}.

In this paper, inspired by Atiyah and Singer's family index theorem \cite{ASIV}, we consider a family of the closed manifold $Z$, namely a fiber bundle $\pi\colon N\to B$ with fiber $Z$. Our focus is on fiber bundles $N$ that admit no fiberwise PSC metric. This viewpoint leads to new obstructions which do not arise in the single-manifold setting. We state the following family version of Gromov's band width conjecture.

\begin{conj}\label{FamilyBandWidth}
	Let $N\to B$ be a fiber bundle with base $B$ a connected closed manifold, and with all fibers diffeomorphic to a fixed connected closed manifold $Z$. Suppose there is no fiberwise PSC metric on $N$. Denote $n-1\coloneq\dim\,Z\geqslant2$, with $n-1\neq 4$. If the fiber bundle $N\times[-1,1]\to B$ admits a metric $g$ with fiberwise scalar curvature bounded from below by $\sigma>0$ independent of $b\in B$, then
    \begin{equation}\label{BandWithEstimateFamily}
        \mathrm{dist}_g\{N\times\{-1\},N\times\{1\}\}\leqslant2\pi\sqrt{\frac{n-1}{n\sigma}}.
    \end{equation}
\end{conj}

\begin{rmk}\label{RelationOfConjs}
	This conjecture reduces to Conjecture \ref{BandWidthConj} when the base space $B$ is a point. In particular, the counterexamples to Conjecture \ref{BandWidthConj} in the case $\dim Z=4$ also apply to this family version.
\end{rmk}

The following example, obtained by a slight modification of Gromov's construction in \cite[Section 2]{Gro2018}, shows that there always exist fiberwise PSC metrics on $N\times[-1,1]$ whose band width is sufficiently small. Moreover, this example demonstrates that the constant $2\pi\sqrt{\frac{n-1}{n}}$ in Conjecture \ref{FamilyBandWidth} is sharp.

\begin{ex}
	Fix a Riemannian metric $g^V$ on the vertical tangent bundle $T^VN$ of $N$. Gromov showed that for a sufficiently large constant $c>0$, the metric
	\begin{equation}
		\bar{g}^V\coloneq c^2\varphi(t)g^V\oplus\dd t\otimes\dd t
	\end{equation}
	on the vertical tangent bundle $T^V\big(N\times\big[-\big(\frac{\pi}{n}-\frac{\varepsilon}{2}\big),\frac{\pi}{n}-\frac{\varepsilon}{2}\big]\big)$ restricts, on each fiber, to a Riemannian metric whose scalar curvature is bounded from below by $n(n-1)$. Here $t$ denotes the coordinate on the interval, and $\varphi$ is the function constructed in \cite[Page 654]{Gro2018} for a constant $\sigma$ slightly larger than $n(n-1)$.

	Next, choose a horizontal tangent bundle $T^HN$ on $N$ and equip it with an arbitrary Riemannian metric. Let $\bar{g}^H$ denote the pullback of this metric to the horizontal tangent bundle $T^H\big(N\times\big[-\big(\frac{\pi}{n}-\frac{\varepsilon}{2}\big),\frac{\pi}{n}-\frac{\varepsilon}{2}\big]\big)$ via the projection $N\times\big[-\big(\frac{\pi}{n}-\frac{\varepsilon}{2}\big),\frac{\pi}{n}-\frac{\varepsilon}{2}\big]\to N$. Then the total metric $\bar{g}^V\oplus\bar{g}^H$ defines a Riemannian metric on $N\times\big[-(\frac{\pi}{n}-\frac{\varepsilon}{2}),\,\frac{\pi}{n}-\frac{\varepsilon}{2}\big]$ with fiberwise scalar curvature bounded from below by $n(n-1)$. The distance between the two boundary components with respect to this metric equals $\frac{2\pi}{n}-\varepsilon$.
\end{ex}

If the fiber $Z$ satisfies the assumptions of any known result of Conjecture~\ref{BandWidthConj} (e.g. $Z$ has non-vanishing higher index), then the corresponding band width estimate holds on each fiber. The family estimate then follows immediately from the pointwise one and the inequality
\begin{equation}
	\mathrm{dist}_g\{N\times\{-1\},N\times\{1\}\}\leqslant \inf_{b\in B}\mathrm{dist}_{g_b}\{Z_b\times\{-1\},Z_b\times\{1\}\},
\end{equation}
where $Z_b\coloneq\pi^{-1}(b)$ and $g_b$ is the restriction of $g$ to $Z_b\times[-1,1]$.

Similarly, if the total space $N$ itself satisfies the assumptions of any known result on Conjecture~\ref{BandWidthConj}, then the estimate \eqref{BandWithEstimateFamily}, with a non-optimal constant, follows by applying those results directly to $N$. Indeed, by \cite[(9.37), Lemma~9.69]{Besse1987}, one may rescale the horizontal part of the metric on $N\times[-1,1]$ by a sufficiently large constant to obtain a PSC metric (cf.\ \cite[Definition~2.1]{HSS2014}), and the band width is non-decreasing under this deformation. Hence, no essentially new phenomenon occurs in these cases.

However, the situation changes fundamentally when the fiber $Z$ or the total space $N$ admits a PSC metric. Indeed, there exist fiber bundles $\pi\colon N \to B$ whose fibers or total spaces admit PSC metrics, while $N$ itself admits no fiberwise PSC metric. Examples in which the fiber $Z$ admits a PSC metric can be found in \cite[Theorem~1.4 and Remark~1.5]{HSS2014}, \cite[Theorem]{KKR2021}, and \cite[Theorem~3.2]{BE2023}. For instance, Krannich, Kupers and Randal-Williams constructed a bundle $E \to S^4$ whose total space has non-vanishing $\widehat{A}$-genus and whose fiber is $\HP^2$, a manifold known to admit a PSC metric.

Now consider the fiber bundle $E\times S^2\to S^4\times S^2$, obtained by pulling back the above bundle $E$ via the projection $S^4\times S^2\to S^4$. This bundle still admits no fiberwise PSC metric, whereas its total space $E\times S^2$ admits a PSC metric. We also construct another example of a fiber bundle that is not a product, whose total space, base, and individual fibers all admit PSC metrics, but which admits no fiberwise PSC metric (see Example~\ref{InfiniteAhatAeraExample2}).

For these examples, the width estimate \eqref{BandWithEstimateFamily} cannot be deduced from previously known results in the case of a single band.

The purpose of this paper is to show that, even for such fiber bundles, the band width estimate \eqref{BandWithEstimateFamily} continues to hold. More precisely, our result implies that even if the fiber $Z$ admits a PSC metric, any fiberwise PSC metric on $N\times[-1,1]$ satisfies \eqref{BandWithEstimateFamily} whenever the fiber bundle $N$ is obtained by a sufficiently complicated twisting of $Z$. The same estimate also holds when the total space $N$ itself admits a PSC metric, but has obstructions arising from the bundle structure, such as the nonvanishing of family indices.

Our proof relies on family index theory and is carried out under the assumption that the vertical tangent bundle of $\pi$ is spin.\footnote{All examples mentioned above satisfy this assumption.} From this perspective, the obstruction to fiberwise PSC metrics arises from a condition analogous to a non-vanishing family index, which is strictly weaker than the non-vanishing of the index of the fiber or the total space.

A key technique in our proof is a product formula for family indices. This formula can be viewed as a partial extension of the partitioned manifold index theorem \cite[Theorem~1.5]{Ang19932} or \cite[Theorem~1]{Rade1994} (see also \cite[Theorem~A.1]{Zeid2022}) to the family setting. It identifies the index bundle of a family of twisted Dirac operators on $N \times \R \to B$, deformed by a zeroth-order potential term, with the index bundle of the corresponding family on $N \to B$ (see Theorem~\ref{FamilyProductFormula2} and Theorem~\ref{FamilyProductFormula2'}). To establish the invertibility of the deformed Dirac operators, we choose to adapt analytic techniques developed by Wang--Xie--Yu in \cite{Yu2024}.

To state our main result in a more general framework, we introduce the notion of fiber bundles with infinite family $\widehat{A}$-area, which is closely related to \cite[Definition 7.3]{CZ2024}. Let $\pi\colon N\to B$ be a fiber bundle with connected closed base $B$ and connected closed fiber $Z$, and assume that the vertical tangent bundle $T^VN$ is oriented and spin. Recall that given an Hermitian vector bundle $\mE\to N$ with an Hermitian connection $\nabla^{\mE}$, the associated family of twisted Dirac operators determines an index bundle in $K^{\dim Z}(B)$ \cite{AS1969,ASIV}. We say the bundle $N\to B$ has infinite family $\widehat{A}$-area (see Definition \ref{InfiniteAhatArea}) if for every $\varepsilon>0$, there exists an Hermitian vector bundle $(\mE,\nabla^{\mE})$ over $N$ such that:
\begin{itemize}
	\item[$(a)$] The inequality $\|R^{E_b}\|_{L^\infty} \leq \varepsilon$ holds for all $b \in B$, where $\big(E_b,\nabla^{E_b}\big)$ is the natural restriction of $\big(\mE,\nabla^{\mE}\big)$ to $Z_b\coloneq\pi^{-1}(b)$ with $R^{E_b}\coloneq\big(\nabla^{E_b}\big)^2$ being the curvature;
	\item[$(b)$] The index bundle in $K^{\dim Z}(B)$ of the corresponding family of twisted Dirac operators does not vanish.
\end{itemize}

The examples discussed above all belong to this class of fiber bundles (see Example \ref{InfiniteAhatAeraExample} and Example \ref{InfiniteAhatAeraExample2}).

\begin{rmk}
	One could also define a cohomological version of infinite family $\widehat{A}$-area for a fiber bundle $N\to B$ whose vertical tangent bundle $T^VN$ is only assumed to be oriented, by replacing the nonvanishing condition in $K$-theory in $(b)$ by
	$$\int_{N/B}\widehat{A}\left(T^VN\right)\ch(\mE)\neq0\in H^*(B,\C).$$
	However, this condition only detects the Chern character of the index bundle and hence loses the torsion information in $K^{\dim Z}(B)$. Moreover, the spin condition is essential for the Dirac operator method used in this paper. For these reasons, we impose the spin condition on $T^VN$ and formulate the obstruction at the level of $K$-theory.
\end{rmk}

We can now state our main theorem, which solves Conjecture \ref{FamilyBandWidth} for fiber bundles with infinite family $\widehat{A}$-area.

\begin{thm}\label{MainTheorem}
	Let $\pi\colon N\to B$ be a fiber bundle with fiber $Z$. Denote the fiber over $b\in B$ by $Z_b$. Assume that $N$ has infinite family $\widehat{A}$-area. In particular, the vertical tangent bundle $T^VN$ is oriented and spin. Consider $M\coloneq N\times[-1,1]$, which naturally carries a fiber bundle structure over $B$ with fiber $Z\times[-1,1]$.
	
	Let $g$ be a Riemannian metric on $M$, and denote by $g_b$ its restriction to the fiber $Z_b\times[-1,1]$. Suppose the corresponding scalar curvature $k_{g_b}$ admits a lower bound $\sigma>0$ independent of $b\in B$. Then
	\begin{equation}
		\mathrm{dist}_{g}\left\{N\times\{-1\},N\times\{1\}\right\}\leqslant2\pi\sqrt{\frac{n-1}{n\sigma}},
	\end{equation}
	where $n\coloneq\dim Z+1$.
\end{thm}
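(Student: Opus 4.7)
I would argue by contradiction, pushing the Dirac-operator strategy that Cecchini \cite{Ce2020} and Cecchini--Zeidler \cite{CZ2024} use for single bands up to the family level via the product formula established earlier in this paper. Suppose $L\coloneq\mathrm{dist}_g\{N\times\{-1\},N\times\{1\}\}>2\pi\sqrt{(n-1)/(n\sigma)}$. The plan is to exhibit a family of Callias-type Dirac operators on $M\to B$ whose family index simultaneously (a) vanishes in $K^0(B)$ by a fiberwise Lichnerowicz--Weitzenb\"ock positivity estimate coming from $k_{g_b}\geqslant\sigma$, and (b) is nonzero by the product formula combined with the infinite $\widehat{A}$-area hypothesis.

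\emph{Setup.} First, the inequality $L>2\pi\sqrt{(n-1)/(n\sigma)}$ is exactly the solvability criterion for the ODE underlying the sharp Gromov constant: one can pick a smooth $\varphi\colon(-1,1)\to\mathbb{R}$ with $\varphi(t)\to\pm\infty$ at $t=\pm 1$ and with a strict pointwise bound of the form
\begin{equation*}
\tfrac{n-1}{n}\bigl(\varphi(t)^2-|\varphi'(t)|\bigr)\;\leqslant\;\tfrac{\sigma}{4}-\eta
\end{equation*}
for some uniform $\eta>0$; the factor $(n-1)/n$ appears after a Yamabe-type conformal rescaling of the fiberwise metric, as in \cite{Ce2020}. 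Next, by the infinite $\widehat{A}$-area hypothesis I pick $\varepsilon<\eta$ together with $(\mathcal{E},\nabla^{\mathcal{E}})\to N$ whose fiberwise curvature is bounded by $\varepsilon$ and whose family index bundle $\Ind(D^{\mathcal{E}}_N)\in K^0(B)$ is nonzero. Pulling $\mathcal{E}$ back to $M$ and forming the vertical spin-Dirac operator of $M\to B$ twisted by $\mathcal{E}$, define the family Callias-type operator
\begin{equation*}
\mathcal{D}_\varphi\;\coloneq\;D^{\mathcal{E}}_M+\varphi(t)\,c(\partial_t)\otimes\gamma,
\end{equation*}
where $\gamma$ is the $\mathbb{Z}/2$-grading on the vertical spinor bundle of $N$ and $c(\partial_t)$ is Clifford multiplication by the unit $t$-direction. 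Because $\varphi$ blows up at the boundary, $\mathcal{D}_\varphi$ is fiberwise self-adjoint Fredholm and defines a class $\Ind(\mathcal{D}_\varphi)\in K^0(B)$.

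\emph{The two computations.} The Weitzenb\"ock expansion of $\mathcal{D}_\varphi^2$ produces, beyond the usual $(\nabla')^*\nabla'+\tfrac14 k_g+\tfrac12 c(R^{\mathcal{E}})$, an extra term of the form $\varphi^2\pm\varphi'\cdot c(\partial_t)\gamma$. After the conformal rescaling of the setup the scalar-curvature contribution carries the sharp factor $(n-1)/n$, so that the choice of $\varphi$ and $\|R^{\mathcal{E}}\|_{L^\infty}\leqslant\varepsilon<\eta$ make the full right-hand side pointwise positive on every fiber, uniformly in $b\in B$. Fiberwise invertibility then gives $\Ind(\mathcal{D}_\varphi)=0$. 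On the other hand, the fiber $Z_b\times[-1,1]$ is a geometric product, with the Callias potential purely in the interval direction, and the one-dimensional Callias index on $[-1,1]$ with such a sign-changing potential is $\pm 1$. The product formula for family indices established earlier in this paper then identifies
\begin{equation*}
\Ind(\mathcal{D}_\varphi)\;=\;\Ind(D^{\mathcal{E}}_N)\in K^0(B),
\end{equation*}
which is nonzero by hypothesis, contradicting the vanishing above.

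\emph{Anticipated obstacles.} The hardest step is the family Weitzenb\"ock estimate: on $M$ the full Bochner formula picks up horizontal curvature contributions from the bundle structure $M\to B$, and these must be controlled uniformly in $b$ and absorbed into the slack $\eta$. I would first rescale the horizontal metric so that its curvature contribution is dominated by $\eta$ and then verify that this rescaling affects neither the fiberwise scalar curvature bound $\sigma$ nor the fiberwise distance $L$. The other delicate point is matching the product formula of the paper (naturally phrased for closed fibers) to the Callias setting on the band $[-1,1]$: the cleanest workaround is to double the band to a circle, or to apply the formula to the even-dimensional doubled fiber bundle and identify its family index with $\Ind(\mathcal{D}_\varphi)$ via a spectral-section / APS-type argument.
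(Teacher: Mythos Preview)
Your overall architecture is right and matches the paper: assume the band is too long, build a family of Callias--Dirac operators whose fiberwise Lichnerowicz/Friedrich estimate forces the index bundle to vanish, and then use the product formula together with the infinite $\widehat{A}$-area hypothesis to see that this same index bundle is $\Ind(D^{\mE}_+)\neq 0$. But two of your implementation choices diverge from the paper in ways that matter.

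\textbf{The potential must be built from a distance function, not from the product coordinate.} You feed the coordinate $t\in(-1,1)$ into $\varphi$ and compute the Weitzenb\"ock cross term as $\pm\varphi'(t)$. But $g$ is an \emph{arbitrary} metric on $N\times[-1,1]$; the fiber metric $g_b$ is not a product, $\partial_t$ is not unit, and the actual cross term is $\varphi'(t)\,c\!\big(\grad_{g_b}t\big)$, with $|\grad_{g_b}t|$ completely uncontrolled. The hypothesis $L>2\pi\sqrt{(n-1)/(n\sigma)}$ never enters your ODE, since your interval has length $2$ regardless of $L$. The paper fixes this exactly as in \cite{Yu2024}: it uses the signed $g$-distance to $N\times\{-1\}$, smooths it to a $(1+\varepsilon)$-Lipschitz function $\psi$, and takes the potential to be $f(\psi)$ with $f$ the solution of an auxiliary ODE of tangent type on an interval of length $l=L$. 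Then $|\grad_{\bar g_b}\psi_b|\leqslant 1+\varepsilon$ is what makes the sharp estimate go through. (Your displayed ODE inequality also has the wrong direction: positivity requires a \emph{lower} bound on $\varphi^2-|\varphi'|$, not an upper bound.)

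\textbf{The paper works on $N\times\R$, not on the band.} Rather than letting the potential blow up at $\partial M$ and then wrestling the product formula into a band/APS setting as you propose, the paper extends $g$ to a complete metric on $N\times\R$ that is a genuine product outside a compact set, and takes $f$ to grow only linearly at infinity. The resulting Callias family then has an almost product structure in the sense of Theorem~\ref{FamilyProductFormula2}, and a straight-line homotopy of potentials $s\,f(\psi_b)+(1-s)\,t$ connects it to $\overline{D}^{\overline{\mE}}+\sqrt{-1}\,t\,\Id$, to which the product formula applies verbatim. This sidesteps both of your anticipated obstacles: no doubling, no APS argument, and no horizontal curvature ever appears, since all operators are purely fiberwise (your worry about horizontal contributions and about rescaling the base metric is unfounded). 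Finally, the sharp factor $n/(n-1)$ in the paper comes from Friedrich's inequality applied fiberwise, not from a conformal rescaling; either device works, but Friedrich's is what meshes cleanly with the $N\times\R$ setup.
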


The rest of the paper is organized as follows. In Subsection \ref{Subsection1.1}, we review the framework of the family index theorem and consider the case of noncompact fibers in a special setting. Subsection \ref{Subsection1.2} recalls the definitions of twisted spinor bundles and families of twisted Dirac operators. Section \ref{Section2} presents the statement and proof of the family product formula, treating even-dimensional and odd-dimensional fibers separately. In Section \ref{Section3}, we introduce the definition of fiber bundles with infinite family $\widehat{A}$-area and prove Theorem \ref{MainTheorem}. The final section is an appendix in which we prove two lemmas, one characterizing when two index bundles in the $K^0$ group coincide and the other for $K^1$ group. These two lemmas are used in the proof of the family product formula.

\section{Index Bundle and Family of twisted Dirac Operators}\label{Section1}
In this section, we begin by reviewing the preliminaries of the family index theory, with a particular focus on the case where the fibers are nearly of the form $Z\times\R$ for a closed manifold $Z$. We then recall the construction of the family of twisted Dirac operators on a fiber bundle with spin vertical tangent bundle.

\subsection{Index Bundle for Noncompact Fibers}\label{Subsection1.1}
Let $\pi\colon N\to B$ be a fiber bundle with fiber $Z$ a connected manifold without boundary. Throughout this paper, we assume that the base manifold $B$ is connected and closed. For each $b\in B$, denote the fiber of $\pi$ over $b$ by $Z_b$. The tangent bundles $TZ_b$ of fibers form a subbundle of $TN$, known as the vertical tangent bundle and denoted by $T^VN$.
	
Let $p\colon\mE\to N$ be a complex vector bundle. Then the composition $\pi\circ p\colon\mE\to B$ defines a fiber bundle, whose fiber over $b\in B$ is denoted by $E_b$. The restriction $p|_{E_b}\colon E_b\to Z_b$ defines a complex vector bundle over $Z_b$ for each $b\in B$. It can be shown that all these bundles are smoothly isomorphic to a fixed complex vector bundle $E\to Z$. These settings coincide with those considered in \cite[Definition 1.2]{ASIV}.

Assume that the fiber $Z$ is a closed manifold and let $\big(\mE,h^{\mE}\big)$ be an Hermitian vector bundle over $N$. Given a Euclidean metric $g^{T^VN}$ on the vertical tangent bundle $T^VN$, this induces a Riemannian metric $g_b$ on each fiber $Z_b$. With respect to this metric, we define the Hilbert space $L^2(Z_b,E_b)$ of square-integrable sections of $E_b\to Z_b$ for each $b\in B$. Since $Z$ is compact, all the spaces $L^2(Z_b,E_b)$ are mutually isomorphic and together form a Hilbert bundle over $B$, which we denote by $\mL^2(N,\mE)$. Similarly, for another Hermitian vector bundle $\left(\mF,h^{\mF}\right)\to N$, we obtain the Hilbert bundle $\mL^2(N,\mF)\to B$.

Denote by $F_b$ the fiber of $\mF$ over $b\in B$. Let $P\coloneq\left\{P_b\right\}_{b\in B}$ be a smooth family of elliptic differential operators from $\mE$ to $\mF$, where $P_b$ maps $C^{\infty}(Z_b,E_b)$ to $C^{\infty}(Z_b,F_b)$. Then the collection
\begin{equation}\label{FamilyOp}
	\left\{\frac{P_b}{\sqrt{\Id+P_b^*P_b}}\colon L^2(Z_b,E_b)\to L^2(Z_b,F_b)\right\}_{b\in B}
\end{equation}
forms a smooth family of Fredholm operators from $\mL^2(N,\mE)$ to $\mL^2(N,\mF)$, where $P_b^*$ denotes the adjoint of $P_b$. According to the result of \cite[Proposition 2.2]{ASIV}, this family defines an index bundle in $K^0(B)$, which we denote by $\Ind(P)$.

Now assume that $\mE=\mF$ and that each $P_b$ in the family $P$ is elliptic and formally self-adjoint. Then $P_b$ is essentially self-adjoint in $L^2(Z_b,E_b)$. Following the spirit of \cite[Theorem B]{AS1969}, we consider the family of Fredholm operators:
\begin{equation}\label{FamilyOpSelf-adjoint}
	\left\{\Id\cos\pi s+\frac{\sqrt{-1}P_b}{\sqrt{\Id+P_b^2}}\sin\pi s\colon L^2(Z_b,E_b)\to L^2(Z_b,E_b)\right\}_{s\in[0,1],b\in B}.
\end{equation}
For each $b\in B$, the path
$$\left\{\Id\cos\pi s+\frac{\sqrt{-1}P_b}{\sqrt{\Id+P_b^2}}\sin\pi s\right\}_{s\in[0,1]}$$
starts at $\Id$ and ends at $-\Id$. Hence, the family in \eqref{FamilyOpSelf-adjoint} can be viewed as a family of Fredholm operators on the suspension
\begin{equation}\label{Suspension}
    \Sigma B\coloneq B\times[0,1]/\left(B\times\{0\}\cup B\times\{1\}\right)
\end{equation}
of $B$, and thus defines an index bundle in $K^0(\Sigma B)=K^1(B)$. We denote this class again by $\Ind(P)$.
	
Consider $N\times\R$, which is a fiber bundle over $B$ with fiber $Z_b\times\R$ over $b$. Denote the projection from $N\times\R$ to $N$ by $\pr$. Let $\big(\ov{\mE},h^{\ov{\mE}}\big)$ be an Hermitian vector bundle over $N\times\R$. We aim to define a Hilbert bundle associated with $\ov{\mE}\to N\times\R$, analogous to the construction of $\mL^2(N,\mE)$. The primary difficulty arises from the noncompactness of $Z\times\R$. To simplify the analysis, we impose additional conditions on $\ov{\mE}$ and the metrics, which are sufficient for the purposes of our current context.
	
\begin{defn}\label{AlmostProductStructure}
	Let $g^{T^V(N\times\R)}$ be a Euclidean metric on the vertical tangent bundle of $N\times\R$. We say that the Hermitian vector bundle $\big(\ov{\mE},h^{\ov{\mE}}\big)\to\big(N\times\R,g^{T^V(N\times\R)}\big)$ has an almost product structure if $\ov{\mE}$, $h^{\ov{\mE}}$ and $g^{T^V(N\times\R)}$ satisfy the following conditions:
	\begin{itemize}
		\item[$(a)$] There exists a Euclidean metric $g^{T^VN}$ on $T^VN$, such that $g^{T^V(N\times\R)}=\pr^*g^{T^VN}\oplus\dd t\otimes\dd t$ outside a compact subset of $N\times\R$. Here $t$ is the function defined by the projection $N\times\R\to\R$.
		\item[$(b)$] There exists an Hermitian vector bundle $\left(\mE,h^{\mE}\right)\to N$ such that $\ov{\mE}\cong \pr^*\mE$ as smooth bundles, and $h^{\ov{\mE}}=\pr^*h^{\mE}$ outside a compact subset of $N\times\R$.\footnote{Since $\R$ is contractible, there always exists a bundle $\mE$ satisfying the first condition in (b). Hence, this definition mainly concerns the Hermitian metric.}
	\end{itemize}
	Furthermore, if we wish to emphasize the structure, we refer to $\big(\ov{\mE},h^{\ov{\mE}}\big)\to\big(N\times\R,g^{T^V(N\times\R)}\big)$ as having an almost product structure of the type $\left(\mE,h^{\mE}\right)\to\big(N,g^{T^VN}\big)$.
\end{defn}
	
Under the above settings and denoting the fiber of the composite map $\ov{\mE}\to N\times\R\to B$ over $b\in B$ by $\ov{E}_b$, it is straightforward to verify that there is an isomorphism of Hilbert spaces
\begin{equation}\label{L2Iso}
	L^2(Z_b\times\R,\ov{E}_b)\cong L^2(Z_b,E_b)\otimes L^2(\R),
\end{equation}
where $L^2(\R)$ denotes the space of square-integrable complex-valued functions on $\R$. Although isomorphism (\ref{L2Iso}) is not an isometry in general, the norms on both sides are equivalent. We already know that $\bigcup\limits_{b \in B}L^2(Z_b,E_b)$ forms a Hilbert bundle over $B$. This leads to the following proposition.
	
\begin{prop}\label{HilbertBundle}
	Let $\big(\ov{\mE},h^{\ov{\mE}}\big)\to\big(N\times\R,g^{T^V(N\times\R)}\big)$ be a bundle having an almost product structure of the type $\big(\mE,h^{\mE}\big)\to\big(N,g^{T^VN}\big)$. Then, the collection of all $L^2$-spaces $L^2(Z_b\times\R,\ov{E}_b)$ forms a Hilbert bundle over $B$, which we denote by $\mL^2\big(N\times\R,\ov{\mE}\big)$. Moreover, there is a bundle isomorphism
	\begin{equation}
		\mL^2\left(N\times\R,\ov{\mE}\right)\cong \mL^2(N,\mE)\otimes L^2(\R),
	\end{equation}
	where $L^2(\R)$ is the trivial bundle $B\times L^2(\R)$ over $B$.
\end{prop}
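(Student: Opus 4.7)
The plan is to reduce the construction to the already-established Hilbert bundle structure on $\mL^2(N,\mE)$ and then transfer it to $\bigcup_{b\in B}L^2(Z_b\times\R,\overline{E}_b)$ via the fiberwise isomorphism noted just before the proposition. Definition \ref{AlmostProductStructure} is designed precisely to make this transfer well-controlled, so that the resulting bundle of $L^2$-spaces is essentially $\mL^2(N,\mE)\otimes L^2(\R)$ up to a uniformly bounded equivalence of norms on each fiber.

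First I would verify carefully that for each $b\in B$ the isomorphism (\ref{L2Iso}) is a topological isomorphism of Hilbert spaces with a uniform equivalence of norms. The key observation is that Definition \ref{AlmostProductStructure} ensures $g^{T^V(N\times\R)}$ coincides with $\pr^*g^{T^VN}\oplus\dd t\otimes\dd t$ and $h^{\overline{\mE}}$ coincides with $\pr^*h^{\mE}$ outside a fixed compact subset of $N\times\R$. Inside this compact set, the ratio of the two induced volume forms on each $Z_b\times\R$ and the ratio of the two fiberwise Hermitian inner products are continuous positive functions on a compact set, hence admit uniform upper and lower bounds. Combined with the compactness of $B$, these bounds can be made independent of $b$, yielding a uniform norm equivalence on every fiber simultaneously.

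Next I would construct local trivializations. Pick $b_0\in B$ and a neighborhood $U\subset B$ over which $\pi\colon N\to B$ trivializes as $U\times Z$, and over which $\mE$ trivializes so that $\mE|_{\pi^{-1}(U)}$ becomes a smooth family of Hermitian bundles on $Z$ parametrized by $b\in U$. These data produce the usual local trivialization of $\mL^2(N,\mE)$ over $U$, identifying each $L^2(Z_b,E_b)$ with a reference $L^2(Z,E_0)$ through a bounded invertible operator depending continuously on $b$. Composing this (tensored with the identity on $L^2(\R)$) with the fiberwise isomorphism of the previous step, and using the identification $\overline{\mE}\cong\pr^*\mE$ from Definition \ref{AlmostProductStructure}(b), I obtain a continuous family of bounded invertible operators from $L^2(Z_b\times\R,\overline{E}_b)$ onto $L^2(Z,E_0)\otimes L^2(\R)$, which I propose as the local trivialization of $\bigcup_{b\in B}L^2(Z_b\times\R,\overline{E}_b)$ over $U$.

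Finally, I would verify that the transition functions between two such local trivializations are continuous in the operator norm topology. They factor as transition functions of $\mL^2(N,\mE)\otimes L^2(\R)$, composed with bounded operators measuring the discrepancy between the intrinsic $L^2$ structure on $Z_b\times\R$ and the product one, whose operator norms are controlled uniformly in $b$ by the first step. This simultaneously equips $\bigcup_{b\in B}L^2(Z_b\times\R,\overline{E}_b)$ with a Hilbert bundle structure and exhibits it as isomorphic to $\mL^2(N,\mE)\otimes L^2(\R)$. I expect the main subtlety to lie in the first step: one must track the almost product hypothesis carefully enough to make the norm equivalence uniform in $b$, which is precisely what allows the fiberwise isomorphisms to assemble into a genuine bundle isomorphism rather than only a pointwise one.
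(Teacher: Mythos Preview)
Your proposal is correct and follows exactly the approach the paper indicates: the paper does not give a detailed proof of this proposition but treats it as an immediate consequence of the fiberwise isomorphism (\ref{L2Iso}) with equivalent norms and the already-known Hilbert bundle structure on $\mL^2(N,\mE)$. Your write-up is simply a careful unpacking of that one-line argument, with the uniformity in $b$ made explicit via compactness.
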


Let $\big(\ov{\mF},h^{\ov{\mF}}\big)\to\big(N\times\R,g^{T^V(N\times\R)}\big)$ be another Hermitian vector bundle with an almost product structure, and denote by $\ov{F}_b$ the fiber of $\ov{\mF}\to N\times\R\to B$ over $b\in B$. Consider a smooth family of first-order elliptic differential operators
$$\ov{P}\coloneq\left\{\ov{P}_b\colon C^{\infty}\left(Z_b\times\R,\ov{E}_b\right)\to C^{\infty}\left(Z_b\times\R,\ov{F}_b\right)\right\}_{b\in B}.$$
Suppose that each $\ov{P}_b$ and its adjoint $\ov{P}_b^*$ are uniformly positive at infinity, i.e., there exist a compact subset $K_b\subset Z_b\times\R$ and a constant $c_b>0$ such that
\begin{equation}
    \left\|\ov{P}_b u\right\|\geqslant c_b\|u\|,\quad \left\|\ov{P}_b^*v\right\|\geqslant c_b\|v\|
\end{equation}
hold for all $u\in C_0^{\infty}\big(Z_b\times\R,\ov{E}_b\big)$ and $v\in C_0^{\infty}\big(Z_b\times\R,\ov{F}_b\big)$ whose supports are contained in $(Z_b\times\R)\setminus K_b$. Then a proof similar to that of \cite[Theorem 2.1]{Ang1993} or \cite[Theorem 3.2]{GL1983} shows that
\begin{equation}\label{FamilyOp2}
	\frac{\ov{P}_b}{\sqrt{\Id+\ov{P}_b^*\ov{P}_b}}\colon L^2\left(Z_b\times\R,\ov{E}_b\right)\to L^2\left(Z_b\times\R,\ov{F}_b\right)
\end{equation}
is a Fredholm operator for each $b\in B$. If, in addition, the Fredholm operators in \eqref{FamilyOp2} depend continuously on $b\in B$, then, following the arguments in \cite{ASIV}, we obtain an index bundle $\Ind\left(\ov{P}\right)\in K^0(B)$.

Now assume that $\ov{\mE}=\ov{\mF}$ and that $\ov{P}$ is a smooth family of formally self-adjoint first-order elliptic differential operators. If each $\ov{P}_b$ is uniformly positive at infinity and the Fredholm operators
\begin{equation}\label{FamilyOpSelfadjoint2}
	\frac{\ov{P}_b}{\sqrt{\Id+\ov{P}_b^2}}\colon L^2\left(Z_b\times\R,\ov{E}_b\right)\to L^2\left(Z_b\times\R,\ov{F}_b\right)
\end{equation}
depend continuously on $b\in B$, then, as in the compact case discussed previously, the family in \eqref{FamilyOpSelf-adjoint}, with $P_b$ replaced by $\ov{P}_b$ and $E_b$ replaced by $\ov{E}_b$, determines an index bundle $\Ind\big(\ov{P}\big)\in K^1(B)$ by \cite{AS1969}.

\subsection{Twisted Spinor Bundles and Families of Twisted Dirac Operators}\label{Subsection1.2}
In this subsection, we consider a fiber bundle $\pi\colon N\to B$ with connected, closed fiber $Z$, and denote the fiber over $b\in B$ by $Z_b$. We assume that the vertical tangent bundle $T^V N$ is oriented and spin, which implies that the fiber $Z$ is spin. Our focus will be on the twisted spinor bundles and the families of twisted Dirac operators associated with the vertical tangent bundles of $N\to B$ and $N\times\R\to B$.
	
We fix a Euclidean metric $g$ on $T^VN$, and denote by $\{g_b\}_{b\in B}$ its restrictions to the fibers. This enables us to define the complex spinor bundle $S\big(T^VN\big)\to N$ associated with $g$,\footnote{Throughout this paper, all spinor bundles are understood to be complex.} which is an Hermitian vector bundle whose metric is induced by $g$. For simplicity, we write
$$\mS\coloneq S\big(T^VN\big).$$
Denote the fiber of $\mS\to B$ over $b\in B$ by $S_b$. Then $S_b$ is the spinor bundle over $Z_b$ associated with the metric $g_b$. Let $\Cl\left(T^V N\right)$ be the Clifford bundle of $T^V N$, defined analogously to \cite[Chapter II, Definition 3.4]{LM1989}. Let $c(e)\in\Cl\left(T^V N\right)$ represent the element corresponding to $e\in T^V N$. Our convention for the Clifford algebra is
\begin{equation}
	c(e_1)c(e_2)+c(e_2)c(e_1)=-2\langle e_1,e_2 \rangle_g.
\end{equation}
It is known that $\mS$ is a $\Cl\left(T^V N\right)$-module and $c(e)$ acts unitarily on $\mS$ for $|e|_g=1$.
	
There is a homomorphism
\begin{equation}\label{tau}
	\tau\coloneq\left(\sqrt{-1}\right)^{\left[\frac{m+1}{2}\right]} c(e_{b,1})\cdots c(e_{b,m}),
\end{equation}
defined for a local oriented orthonormal frame $\{e_{b,1},\dots,e_{b,m}\}$ of $TZ_b$, where $m\coloneq\dim Z$.\footnote{Here we use $[s]$ to denote the greatest integer less than or equal to $s$ for $s\in\R$.} It is straightforward to verify that $\tau$ defines a global section of the endomorphism bundle $\End(S_b)\to Z_b$.
	
In the case where $m$ is even, we compute that $\tau^2=\Id$. Thus, there is a $\Z_2$-grading
\begin{equation}\label{Z2GradingOriginal}
	S_b=S_{b,+}\oplus S_{b,-}
\end{equation}
on $S_b$, where $S_{b,\pm}$ are the eigenspaces corresponding to the eigenvalues $\pm1$ of $\tau$. Combining all of this, we obtain the decomposition $\mS=\mS_+\oplus\mS_-$ of the Hermitian vector bundle $\mS$.
	
For odd $m$, there are in fact two isomorphic choices for the spinor bundle $\mS$, corresponding to the two inequivalent irreducible representations of the Clifford algebra \cite[Section II.3]{LM1989}. These two bundles are characterized by the relation $\tau=\pm\Id$.
	
Let $\nabla^{\mS}$ be the Hermitian connection on $\mS$ induced by the fiberwise Levi-Civita connections $\nabla^{TZ_b}$ on $Z_b$. Denote the restriction of $\nabla^{\mS}$ to the fiber $S_b$ by $\nabla^{S_b}$ for $b\in B$. This connection satisfies the following compatibility condition
\begin{equation}\label{SpinConnection}
	\nabla^{S_b}_{e_1}\left(c(e_2)s\right)=c\left(\nabla^{TZ_b}_{e_1}e_2\right)s+c(e_2)\nabla^{S_b}_{e_1}s,
\end{equation}
where $s$ is a smooth section of $S_b$, and $e_1$, $e_2$ are smooth sections of $TZ_b$ \cite[Section II.4]{LM1989}.
	
Let $\mE\to N$ be an Hermitian vector bundle equipped with an Hermitian connection $\nabla^{\mE}$. We denote by $\nabla^{E_b}$ the restriction of $\nabla^{\mE}$ to the fiber $E_b$ of $\mE$ over $b\in B$. The tensor product $\mS\otimes\mE$ is again an Hermitian vector bundle over $N$, whose restriction to each fiber $Z_b$ is the twisted spinor bundle $S_b\otimes E_b$ for each $b\in B$. We define the corresponding family of twisted Dirac operators to be $D^{\mE}\coloneq\big\{D^{E_b}\big\}_{b\in B}$. More precisely, if $\{e_{b,1},\dots,e_{b,m}\}$ is a local orthonormal frame of $TZ_b$, the twisted Dirac operator on $E_b$ is defined by
\begin{equation}\label{TwistedDiracOp}
	D^{E_b}\coloneq\sum_{k=1}^{m}c(e_{b,k})\nabla^{S_b\otimes E_b}_{e_{b,k}}=\sum_{k=1}^{m}c(e_{b,k})\left(\nabla^{S_b}_{e_{b,k}}\otimes\Id_{E_b}+\Id_{S_b}\otimes\nabla^{E_b}_{e_{b,k}}\right).
\end{equation}
It is straightforward to verify that the formula (\ref{TwistedDiracOp}) is independent of the choices of the local orthonormal frames, so $D^{E_b}$ is well-defined.
	
It is known that $D^{E_b}$ is a formally self-adjoint first-order elliptic differential operator. When $m$ is even, the $\Z_2$-grading in (\ref{Z2GradingOriginal}) induces a $\Z_2$-grading on the twisted spinor bundles
\begin{equation}
	S_b\otimes E_b=S_{b,+}\otimes E_b\oplus S_{b,-}\otimes E_b.
\end{equation}
Note that for any $e\in TZ_b$, the Clifford multiplication $c(e)$ interchanges this grading, and so does the twisted Dirac operator $D^{E_b}$. Hence, $D^{E_b}$ admits a decomposition
\begin{equation}
	D^{E_b}=
	\begin{pmatrix}
		0 & D^{E_b}_- \\
		D^{E_b}_+ & 0
	\end{pmatrix},
\end{equation}
where
$$D^{E_b}_{\pm}\colon C^{\infty}\left(Z_b,S_{b,\pm}\otimes E_b\right)\to C^{\infty}\left(Z_b,S_{b,\mp}\otimes E_b\right),$$
and $D^{E_b}_+$ and $D^{E_b}_-$ are formally adjoint to each other. In the family case, we also write
\begin{equation}
	D^{\mE}=
	\begin{pmatrix}
		0 & D^{\mE}_- \\
		D^{\mE}_+ & 0
	\end{pmatrix}.
\end{equation}
	
Next, we consider the fiber bundle $N\times\R\to B$ with fiber $Z\times\R$ of dimension $m+1$. Denote the projections $N\times\R\to\R$  and $N\times\R\to N$ by $t$ and $\pr$, respectively. Endow the vertical tangent bundle $T^V(N\times\R)$ with the orientation such that  $\big\{e_1,\dots,e_m,\frac{\p}{\p t}\big\}$ is oriented whenever $\{e_1,\dots,e_m\}$ is a local oriented frame of $T^VN\subset T^V(N\times\R)$.
	
Note that $T^V(N\times\R)$ remains spin. Given a Euclidean metric $\bar{g}$ on $T^V(N\times\R)$, we denote the associated spinor bundle by $\ov{\mS}\coloneq S\big(T^V(N\times\R)\big)$. For each $b\in B$, the restriction of $\ov{\mS}$ to $Z_b\times\R$ is the spinor bundle $\ov{S}_b$ corresponding to the restriction metric $\bar{g}_b$ of $\bar{g}$ to $T(Z_b\times\R)$. We denote the Clifford action of $\Cl\left(T^V(N\times\R)\right)$ on $\ov{\mS}$ by $\bar{c}(\cdot)$. There is an analogue of the homomorphism $\tau$ defined in \eqref{tau}, which we denote by $\bar{\tau}$:
\begin{equation}\label{taubar}
	\bar{\tau}\coloneq\left(\sqrt{-1}\right)^{\left[\frac{m+2}{2}\right]}\,\bar{c}(e_{b,1})\cdots\bar{c}(e_{b,m+1})
\end{equation}
for a local oriented orthonormal frame $\{e_{b,1},\dots,e_{b,m+1}\}$ of $T(Z_b\times\R)$.
	
Similar to the compact case, there exists an Hermitian connection $\ov{\nabla}^{\ov{\mS}}$ on $\ov{\mS}$ induced by the fiberwise Levi-Civita connections. We denote by $\Big\{\ov{\nabla}^{\ov{S}_b}\Big\}_{b\in B}$ the restrictions of $\ov{\nabla}^{\ov{\mS}}$ to the fibers. 
	
Let $\ov{\mE}$ be an Hermitian vector bundle over $N\times\R$, equipped with an Hermitian connection. Denote by $\ov{E}_b$ the fiber of $\ov{\mE}$ over $b\in B$. In analogy with (\ref{TwistedDiracOp}), we define the family of twisted Dirac operators $\ov{D}^{\ov{\mE}}=\Big\{\ov{D}^{\ov{E}_b}\Big\}_{b\in B}$, where $\ov{D}^{\ov{E}_b}$ acts on $\ov{S}_b\otimes\ov{E}_b$. As discussed earlier, each $\ov{D}^{\ov{E}_b}$ is formally self-adjoint. Moreover, when $m$ is odd, there is a $\Z_2$-grading $\ov{\mS}\otimes\ov{\mE}=\ov{\mS}_+\otimes\ov{\mE}\oplus\ov{\mS}_-\otimes\ov{\mE}$ induced by $\bar{\tau}$ in (\ref{taubar}) and the family $\ov{D}^{\ov{\mE}}$ interchanges this grading.
	
Now consider the special case where $\bar{g}=\pr^*g\oplus\dd t\otimes\dd t$ and $\ov{\mE}=\pr^*\mE$ is equipped with the pullback metric and pullback connection. There is a canonical embedding
\begin{equation}\label{j}
	j\colon N\to N\times\R
\end{equation}
with image $N\times\{0\}$, which is covered by a morphism of Clifford bundles (cf. \cite[Chapter I, Theorem 3.7]{LM1989}):
\begin{equation}\label{CliffordBundleMap}
	\begin{aligned}
		\Cl\left(T^VN\right) & \to\left.\Cl\left(T^V(N\times\R)\right)\right|_{N\times\{0\}} \\
		c(e)\ \quad & \mapsto\quad\quad\bar{c}\left(\frac{\p}{\p t}\right)\bar{c}(e).
	\end{aligned}
\end{equation}
Through this map, the restriction $\ov{\mS}\big|_{N\times\{0\}}$ becomes a $\Cl\left(T^VN\right)$-module.
	
When $m$ is even, dimension counting shows that there exists a bundle isomorphism
\begin{equation}\label{BundleIsomorphism}
	\mS\cong\ov{\mS}|_{N\times\{0\}}
\end{equation}
covering $j$. Moreover, due to the product metric on $T^V(N\times\R)$, the Hermitian vector bundle $\ov{\mS}$ is isometric to the pullback of $\mS$ via the projection $\pr$ with respect to the corresponding pullback metric, i.e.,
\begin{equation}\label{BundleIsometry}
	\ov{\mS}\cong\pr^*\mS.
\end{equation}
Consequently, $\ov{\mS}$, and hence $\ov{\mS}\otimes\ov{\mE}$, admit almost product structures.
	
When $m$ is odd, let $\mS_1$ denote the spinor bundle $\mS$ of $T^VN$ with $\tau=\Id$ and $\mS_2$ the one with $\tau=-\Id$. For a local oriented orthonormal frame $\{e_{b,1},\dots,e_{b,m}\}$ of $TZ_b=T(Z_b\times\{0\})$, we compute
\begin{equation}
	\begin{aligned}
		\tau=& \left(\sqrt{-1}\right)^{\frac{m+1}{2}} \bar{c}\left(\frac{\p}{\p t}\right)\bar{c}(e_{b,1})\cdots \bar{c}\left(\frac{\p}{\p t}\right)\bar{c}(e_{b,m}) \\
		=& -\left(\sqrt{-1}\right)^{\left[\frac{m+2}{2}\right]} \bar{c}(e_{b,1})\cdots \bar{c}(e_{b,m})\bar{c}\left(\frac{\p}{\p t}\right) \\
		=& -\bar{\tau}
	\end{aligned}
\end{equation}
under the identification in (\ref{CliffordBundleMap}). Dimension counting then gives bundle isomorphisms
\begin{equation}\label{BundleIsomorphism'}
	\mS_1\cong\ov{\mS}_-|_{N\times\{0\}},\quad \mS_2\cong\ov{\mS}_+|_{N\times\{0\}}
\end{equation}
covering $j$.\footnote{See \cite[Chapter I, Proposition 5.10]{LM1989} for details.} Moreover, the product metric on $T^V(N\times\R)$ yields bundle isometries
\begin{equation}\label{BundleIsometry'}
	\ov{\mS}_+ \cong \pr^*\mS_2, \quad \ov{\mS}_- \cong \pr^*\mS_1,
\end{equation}
where the bundles on the right-hand side carry the pullback metric. Hence, in this case $\ov{\mS}$ and $\ov{\mS}\otimes\ov{\mE}$ also admit almost product structures.

\section{Family Product Formula}\label{Section2}
In this section, we establish the product formula for index bundles, treating separately the cases of even and odd $\dim Z$.

\subsection{The Case of Even-Dimensional $Z$}\label{Subsection2.1}
Let the fiber bundle $\pi\colon N\to B$ with fiber $Z$ be defined as in Subsection \ref{Subsection1.2}. Still denote by $g$ the Euclidean metric on $T^VN$. Throughout this subsection, we assume that $m=\dim Z$ is even.

Let $\mE\to N$ be an Hermitian vector bundle equipped with an Hermitian connection $\nabla^{\mE}$. This defines a family of twisted Dirac operators $D^{\mE}=\big\{D^{E_b}\big\}_{b\in B}$ as in \eqref{TwistedDiracOp}. We already know that $D^{\mE}$ splits as
\begin{equation}
	D^{\mE}=
	\begin{pmatrix}
		0 & D^{\mE}_- \\
		D^{\mE}_+ & 0
	\end{pmatrix}.
\end{equation}
As introduced in Subsection \ref{Subsection1.1}, the family $\big\{D^{E_b}_+\big\}_{b\in B}$ defines an index bundle $\Ind\big(D^{\mE}_+\big)\in K^0(B)$.

Then we consider the fiber bundle $N\times\R\to B$, and denote the projections $N\times\R\to\R$ by $t$ and $N\times\R\to N$ by $\pr$ as before. Equip $T^V(N\times\R)$ with the metric $\bar{g}\coloneq\pr^*g\oplus\dd t\otimes\dd t$ and the orientation specified in Subsection \ref{Subsection1.2}. We denote the associated spinor bundle by $\ov{\mS}$, which is chosen to satisfy $\bar{\tau}=-\Id$. Let $\ov{\mE}\coloneq\pr^*\mE$ be the pullback bundle equipped with the pullback metric and pullback connection. As discussed in the last subsection, the twisted spinor bundle $\ov{\mS}\otimes\ov{\mE}$ admits an almost product structure.

Denote by $\ov{D}^{\ov{\mE}}=\Big\{\ov{D}^{\ov{E}_b}\Big\}_{b\in B}$ the family of twisted Dirac operators acting on $\ov{\mS}\otimes\ov{\mE}$. Unfortunately, this family is generally not uniformly positive at infinity. We need to deform it by a zeroth-order potential term.

Consider the family of first-order elliptic differential operators
\begin{equation}\label{A_bEven}
	\Big\{A_b\coloneq\ov{D}^{\ov{E}_b}+\sqrt{-1}t\Id\Big\}_{b\in B},
\end{equation}
where $t$ is the function on $Z_b\times\R$ induced by the projection $N\times\R\to\R$. Such operators are also referred to as Callias-type operators, following \cite{Ca1978}.

Observe that $A_b^*=\ov{D}^{\ov{E}_b}-\sqrt{-1}t\Id$. Using \eqref{SpinConnection} and \eqref{TwistedDiracOp}, we compute
\begin{equation}\label{A_b^*A_b}
	\left(\ov{D}^{\ov{E}_b}\mp \sqrt{-1}t\Id\right)\left(\ov{D}^{\ov{E}_b}\pm \sqrt{-1}t\Id\right)=\left(\ov{D}^{\ov{E}_b}\right)^2\pm \sqrt{-1}\bar{c}\left(\frac{\p}{\p t}\right)+t^2\Id.
\end{equation}
It follows that $A_b$ and $A_b^*$ are uniformly positive at infinity. 

Fix any $b_0\in B$ and choose a sufficiently small neighborhood $U_{b_0}$ of $b_0$ over which the fiber bundle $N\times\R\to B$ and the vector bundle $\ov{\mE}$ are trivialized. Let $\{b_n\}_{n\in\N^*}\subset U_{b_0}$ be a sequence converging to $b_0$. Via this trivialization, we regard the operators $A_{b_n}$ and $\ov{D}^{\ov{E}_{b_n}}$ as acting on sections over $Z_{b_0}\times\R$.

Since $T^N(N\times\R)$ is equipped with the product metric and $\ov{\mE}$ with the pullback metric, there exists a sequence of positive numbers $\{c_n\}_{n\in\N^*}$ with $c_n\to 0$ such that for any $u_{b_0}\in C_0^{\infty}\big(Z_{b_0}\times\R,\ov{S}_{b_0}\otimes\ov{E}_{b_0}\big)$,
\begin{equation}\label{GraphEstimate1}
	\left\|\left(A_{b_n}-A_{b_0}\right)u_{b_0}\right\|
	=
	\left\|\left(\ov{D}^{\ov{E}_{b_n}}
	-\ov{D}^{\ov{E}_{b_0}}\right)u_{b_0}\right\|
	\leq c_n\left\|u_{b_0}\right\|_{H^1},
\end{equation}
where $\|\cdot\|_{H^1}$ denotes the $H^1$-norm. Equation \eqref{A_b^*A_b} gives
\begin{equation}\label{GraphEstimate2}
	\left\|A_{b_0}u_{b_0}\right\|^2\geqslant\left\|\ov{D}^{\ov{E}_{b_0}}u_{b_0}\right\|^2-\left\|u_{b_0}\right\|^2.
\end{equation}
Combining this with \eqref{GraphEstimate1} and the elliptic estimate for $\ov{D}^{\ov{E}_{b_0}}$, we obtain a constant $c>0$ such that
\begin{equation}\label{GraphEstimateFinal}
	\left\|\left(A_{b_n}-A_{b_0}\right)u_{b_0}\right\|\leqslant c_n\cdot c\left(\left\|\ov{D}^{\ov{E}_{b_0}}u_{b_0}\right\|+\left\|u_{b_0}\right\|\right)\leqslant 2c\cdot c_n\left(\left\|u_{b_0}\right\|+\left\|A_{b_0}u_{b_0}\right\|\right).
\end{equation}

The same argument applies to the adjoint family $\{A_b^*\}_{b\in B}$. Therefore, by \cite[Lemma 3.8]{Ebert2017} (or \cite[Proposition 1.7]{Nicolaescu2007}) and by following the proof of \cite[Proposition 3.7]{Ebert2017}, we conclude that the Fredholm operator
$$\frac{A_b}{\sqrt{\Id+A_b^*A_b}}\colon L^2\left(Z_b\times\R,\ov{S}_b\otimes\ov{E}_b\right)\to L^2\left(Z_b\times\R,\ov{S}_b\otimes\ov{E}_b\right)$$
depends continuously on $b\in B$.

Applying Proposition \ref{HilbertBundle} to $\ov{\mS}\otimes\ov{\mE}$ and following the argument given at the end of Subsection \ref{Subsection1.1}, we conclude that the family in \eqref{A_bEven} defines an index bundle $\Ind(A)\in K^0(B)$.

We now present a proposition from which the main theorem of this subsection essentially follows.

\begin{prop}\label{FamilyProductFormula1}
	Let $\Ind(A)$ and $\Ind\big(D^{\mE}_+\big)$ be defined as above. Then we have
	\begin{equation}
		\Ind(A)=\Ind\left(D^{\mE}_+\right)\in K^0(B).
	\end{equation}
\end{prop}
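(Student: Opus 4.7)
The plan is to prove the identity pointwise via a Callias-type harmonic oscillator argument and then promote the pointwise result to an equality of $K$-theory classes using the characterization lemma from the appendix.

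First, I would exploit the almost product structure to rewrite $\overline{D}^{\overline{E}_b}$. Under the bundle isomorphism $\overline{\mS}\cong\pr^*\mS$ of (\ref{BundleIsometry}), together with the Clifford identification (\ref{CliffordBundleMap}) and the orientation convention (\ref{ChoiceofSpinor}), a short Clifford-algebraic computation identifies $\bar{c}(\p/\p t)$ with $\sqrt{-1}\tau$ and $\bar{c}(e)$ with $-\sqrt{-1}\tau c(e)$ on $\mS$. Since the metric and connection are of product form, this produces the clean splitting
\begin{equation*}
	\overline{D}^{\overline{E}_b}=\sqrt{-1}\tau\left(\frac{\p}{\p t}-D^{E_b}\right),
	\qquad
	A_b=\sqrt{-1}\tau\left(\frac{\p}{\p t}-D^{E_b}\right)+\sqrt{-1}t.
\end{equation*}

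Second, I would analyze $\Ker A_b$ and $\Ker A_b^*$ by computing the squares
\begin{equation*}
	A_b^*A_b=\left(-\frac{\p^2}{\p t^2}+t^2-1\right)+\left((D^{E_b})^2+(1-\tau)\right),
\end{equation*}
\begin{equation*}
	A_bA_b^*=\left(-\frac{\p^2}{\p t^2}+t^2-1\right)+\left((D^{E_b})^2+(1+\tau)\right).
\end{equation*}
Both summands on the right are non-negative, and the harmonic oscillator piece $-\p_t^2+t^2-1$ vanishes precisely on the one-dimensional ground state spanned by $\phi_0(t)\coloneq\pi^{-1/4}e^{-t^2/2}$. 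On the $S_{b,-}$ component the Clifford shift $1-\tau=2$ is strictly positive, forcing the $S_{b,-}$ part of $\Ker A_b$ to vanish; the $S_{b,+}$ part must then be of the form $\phi_0\otimes v$ with $v\in\Ker D^{E_b}_+$. Symmetrically, $\Ker A_b^*\cong\phi_0\otimes\Ker D^{E_b}_-$, which establishes the pointwise equality $\dim\Ker A_b-\dim\Ker A_b^*=\dim\Ker D^{E_b}_+-\dim\Ker D^{E_b}_-$.

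Finally, to upgrade this pointwise result to an equality in $K^0(B)$, I would apply the characterization lemma from the appendix. The map $v\mapsto\phi_0\otimes v$ defines a smooth isometric Hilbert bundle embedding
\begin{equation*}
	\phi_0\otimes\mL^2(N,\mS\otimes\mE)\hookrightarrow\mL^2\left(N\times\R,\overline{\mS}\otimes\overline{\mE}\right)
\end{equation*}
which, up to the scalar $\sqrt{-1}$, intertwines $D^{\mE}_\pm$ with the restriction of $A$ and $A^*$ to this subbundle, while on the orthogonal complement the operator $A^*A$ inherits a uniform lower bound coming from the harmonic oscillator gap $\geq 2$. I expect the main technical obstacle to be verifying the precise hypotheses of the appendix lemma, which is designed exactly to absorb the infinite-dimensional ``higher harmonics'' remainder and to conclude $\Ind(A)=\Ind(D^{\mE}_+)\in K^0(B)$.
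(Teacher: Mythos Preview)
Your proposal is correct and follows essentially the same route as the paper. Both arguments build the bounded linear maps $\varphi_b,\psi_b$ by tensoring with the Gaussian ground state $e^{-t^2/2}$, compute $A_b^*A_b$ and $A_bA_b^*$ as a harmonic oscillator piece plus a fiberwise Dirac piece to identify $\Ker A_b\cong\Ker D^{E_b}_+$ and $\Ker A_b^*\cong\Ker D^{E_b}_-$, and then invoke Lemma~\ref{IndBunLem}.

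Two minor remarks. First, your explicit identification $\bar{c}(\partial/\partial t)=\sqrt{-1}\tau$ and the resulting formula $\overline{D}^{\overline{E}_b}=\sqrt{-1}\tau(\partial_t-D^{E_b})$ is a clean repackaging of what the paper does more implicitly via (\ref{CliffordBundleMap}) and (\ref{TauReformulate}); it makes the square computations slightly more transparent but is not a different idea. Second, your observation about the uniform lower bound on the orthogonal complement (``harmonic oscillator gap $\geq 2$'') is true but superfluous: Lemma~\ref{IndBunLem} does not ask for any positivity away from the embedded copy, only that the commutative square $A_b\varphi_b=\psi_b D^{E_b}_+$ holds and that the induced kernel and cokernel maps are isomorphisms. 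The one small step you should not skip is passing from the intertwining relations for the unbounded operators $D^{E_b}_\pm,A_b,A_b^*$ to the intertwining for the bounded Fredholm representatives $D^{E_b}_+(\Id+D^{E_b}_-D^{E_b}_+)^{-1/2}$ and $A_b(\Id+A_b^*A_b)^{-1/2}$; the paper does this by noting that the two intertwining relations together give $\varphi_b(\Id+D^{E_b}_-D^{E_b}_+)=(\Id+A_b^*A_b)\varphi_b$, whence $\varphi_b$ commutes with the square roots by functional calculus.
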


To prove this proposition, we first need a lemma.

Let $\mH_1,\mH_1',\mH_2,\mH_2'$ be Hilbert bundles over a connected closed manifold $B$, with fibers $H_{1,b},H_{1,b}',H_{2,b},H_{2,b}'$ over $b\in B$, respectively. Consider two continuous families of Fredholm operators
$$P=\left\{P_b\colon H_{1,b}\to H_{1,b}'\right\}_{b\in B},\quad Q=\left\{Q_b\colon H_{2,b}\to H_{2,b}'\right\}_{b\in B}.$$
Suppose there exist two continuous families of bounded linear operators,
$$\varphi=\left\{\varphi_b\colon H_{1,b}\to H_{2,b}\right\}_{b\in B},\quad \psi=\left\{\psi_b\colon H_{1,b}'\to H_{2,b}'\right\}_{b\in B},$$
making the diagram in Figure \ref{IndBunLemFig1} commute for all $b\in B$. Then, via diagram chasing (cf. \cite[Proposition 2.70 and 2.71]{Rotman}), we obtain induced maps $\mu_b\colon\Ker P_b\to\Ker Q_b$ and $\eta_b\colon\Coker P_b\to\Coker Q_b$ from $\varphi_b$ and $\psi_b$, respectively.

\begin{figure}[hbtp]
	\centering
	\begin{tikzcd}[row sep=large, column sep=large]
		H_{1,b} \arrow[d, "\varphi_b"'] \arrow[r, "P_b"]
		&H_{1,b}' \arrow[d, "\psi_b"]
		\\
		H_{2,b} \arrow[r, "Q_b"]
		&H_{2,b}'
	\end{tikzcd}
	\caption{}\label{IndBunLemFig1}
\end{figure}

\begin{lemma}\label{IndBunLem}
	If the maps $\mu_b$ and $\eta_b$ are isomorphisms for all $b\in B$, then the index bundles determined by $P$ and $Q$ coincide, i.e.,
	\begin{equation}
		\Ind(P)=\Ind(Q)\in K^0(B).
	\end{equation}
\end{lemma}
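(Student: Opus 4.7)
My plan is to realize both $\Ind(P)$ and $\Ind(Q)$ via a common finite-dimensional stabilization, and then exhibit an explicit bundle isomorphism between the two stabilized kernels built directly from $\varphi$. The first ingredient is standard in family index theory: by compactness of $B$ and the Fredholm property of $P$, one may choose a trivial finite-rank bundle $V\coloneq B\times\C^N$ and a bundle map $v\colon V\to\mH_1'$ such that $\Image(v_b)+\Image(P_b)=\mH_{1,b}'$ for every $b\in B$. The stabilized operator $P\oplus v\colon\mH_1\oplus V\to\mH_1'$, $(u,x)\mapsto P_b u+v_b x$, is then fiberwise surjective, its kernel $K_P$ is a genuine vector bundle over $B$, and $\Ind(P)=[K_P]-[V]\in K^0(B)$.

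The first key observation is that the \emph{same} $V$ stabilizes $Q$ via $\psi\circ v$. Applying $\psi_b$ to $\Image(v_b)+\Image(P_b)=\mH_{1,b}'$ and using $\psi_b P_b=Q_b\varphi_b$ yields $\Image(\psi_b v_b)+\Image(Q_b)\supseteq\psi_b(\mH_{1,b}')$; combined with the surjectivity of $\eta_b$, which is equivalent to $\psi_b(\mH_{1,b}')+\Image(Q_b)=\mH_{2,b}'$, this gives $\Image(\psi_b v_b)+\Image(Q_b)=\mH_{2,b}'$. Hence $K_Q\coloneq\Ker(Q\oplus\psi v)$ is also a vector bundle with $\Ind(Q)=[K_Q]-[V]$, and the lemma reduces to the statement $[K_P]=[K_Q]$ in $K^0(B)$.

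To produce this equality I define a bundle map $\Phi\colon K_P\to K_Q$ fiberwise by $(u,x)\mapsto(\varphi_b u,x)$. The commutativity $\psi_b P_b=Q_b\varphi_b$ ensures $\Phi$ lands in $K_Q$, and continuity of $\varphi$ makes it a bundle map. Fiberwise injectivity is immediate from injectivity of $\mu_b$: if $\Phi_b(u,x)=0$ then $x=0$, so $u\in\Ker P_b$ and $\mu_b u=\varphi_b u=0$, forcing $u=0$. For fiberwise surjectivity, given $(w,y)\in K_{Q,b}$ I use that $\eta_b$ is an isomorphism to show $v_b y\in\Image P_b$: indeed $\psi_b v_b y=-Q_b w\in\Image Q_b$, so $\eta_b([v_b y])=0$, hence $[v_b y]=0$ in $\Coker P_b$. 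Choosing $u_0$ with $P_b u_0=-v_b y$, one checks via $\psi_b P_b=Q_b\varphi_b$ that $w-\varphi_b u_0\in\Ker Q_b$, and correcting $u_0$ by $\mu_b^{-1}(w-\varphi_b u_0)\in\Ker P_b$ produces the required preimage.

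The main obstacle is purely technical: the construction of the stabilizing pair $(V,v)$ and the verification that $K_P$ and $K_Q$ are locally trivial vector bundles of locally constant rank. These are classical ingredients in the Atiyah--Singer framework recalled in Subsection \ref{Subsection1.1}; once they are in place the argument is a short diagram chase that uses both hypotheses on $\mu$ and $\eta$ in essential ways, yielding $\Ind(P)=[K_P]-[V]=[K_Q]-[V]=\Ind(Q)$.
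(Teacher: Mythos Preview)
Your proposal is correct and follows essentially the same route as the paper's proof: stabilize $P$ by a trivial bundle $V=\C^l$ with a map $v\colon V\to\mH_1'$ making $P\oplus v$ surjective, stabilize $Q$ by the \emph{same} $V$ via $\psi\circ v$ (surjectivity of $\eta_b$ gives surjectivity of $Q\oplus\psi v$), and then show that $(\varphi_b,\Id)$ restricts to a fiberwise isomorphism $\Ker(P\oplus v)_b\to\Ker(Q\oplus\psi v)_b$ using injectivity of $\mu_b$ for injectivity and injectivity of $\eta_b$ together with surjectivity of $\mu_b$ for surjectivity. The paper's argument in the Appendix is the same diagram chase with slightly different notation.
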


The proof of this lemma is somewhat involved and may interrupt the main flow of the article. For this reason, we have included the complete proof in Appendix.

Now we apply Lemma \ref{IndBunLem} to prove Proposition \ref{FamilyProductFormula1}. The calculations on the fibers in the proof are partially inspired by those in \cite{Ang19932}.

\begin{proof}[Proof of Proposition \ref{FamilyProductFormula1}]
	For convenience, we define
	$$P_{b,\pm}\coloneq\frac{D^{E_b}_\pm}{\sqrt{\Id+D^{E_b}_\mp D^{E_b}_\pm}},\quad Q_b\coloneq\frac{A_b}{\sqrt{\Id+A_b^*A_b}}.$$
	Due to Lemma \ref{IndBunLem}, it is sufficient to find two smooth families of bounded linear maps,
	$$\varphi=\left\{\varphi_b\colon L^2\big(Z_b,S_{b,+}\otimes E_b\big)\to L^2\big(Z_b\times\R,\ov{S}_b\otimes\ov{E}_b\big)\right\}_{b\in B}$$
	and
	$$\psi=\left\{\psi_b\colon L^2\big(Z_b,S_{b,-}\otimes E_b\big)\to L^2\big(Z_b\times\R,\ov{S}_b\otimes\ov{E}_b\big)\right\}_{b\in B},$$
	such that:
	\begin{itemize}
		\item[$(a)$] The diagram, as shown in Figure \ref{PropDiagram1}, commutes for all $b\in B$;
		\item[$(b)$] $\varphi_b$ induces an isomorphism between the kernels of $P_{b,+}$ and $Q_b$, while $\psi_b$ induces an isomorphism between the cokernels of $P_{b,+}$ and $Q_b$.
	\end{itemize}
	
	\begin{figure}[hbtp]
		\centering
		\begin{tikzcd}[row sep=large]
			L^2\big(Z_b,S_{b,+}\otimes E_b\big) \arrow[r, "\varphi_b"] \arrow[d, "P_{b,+}"']
			&L^2\left(Z_b\times\R,\ov{S}_b\otimes \ov{E}_b\right) \arrow[d, "Q_b"] \\
			L^2\big(Z_b,S_{b,-}\otimes E_b\big) \arrow[r, "\psi_b"]
			&L^2\left(Z_b\times\R,\ov{S}_b\otimes \ov{E}_b\right)
		\end{tikzcd}
		\caption{}\label{PropDiagram1}
	\end{figure}
	
	It is more convenient to consider under the isomorphism (\ref{BundleIsomorphism}), where we identify $S_b$ with the restriction of $\ov{S}_b$ to $Z_b\times\{0\}$. Due to the product metric on $T^V(N\times\R)$, we additionally have
	\begin{equation}
		\ov{\nabla}^{\ov{S}_b}=\left(\pr_b\right)^*\nabla^{S_b}.
	\end{equation}
	Thus, under the above identification and (\ref{CliffordBundleMap}), the twisted Dirac operator in (\ref{TwistedDiracOp}) can be reformulated as
	\begin{equation}\label{DiracReformulate}
		D^{E_b}=\sum_{k=1}^{m} \bar{c}\left(\frac{\p}{\p t}\right)\bar{c}(e_{b,k})\ov{\nabla}^{\ov{S}_b\otimes \ov{E}_b}_{e_{b,k}}
	\end{equation}
	acting on
	$$C^{\infty}\left(Z_b\times\{0\},\ov{S}_b\otimes\ov{E}_b\right)\coloneq C^{\infty}\left(Z_b\times\{0\},\left.\left(\ov{S}_b\otimes\ov{E}_b\right)\right|_{Z_b\times\{0\}}\right),$$
	for a local orthonormal basis $\{e_{b,1},\dots,e_{b,m}\}$ of $T(Z_b\times\{0\})$. Moreover, the endomorphism $\tau$ defined in (\ref{tau}) becomes
	\begin{equation}\label{TauReformulate}
		\begin{aligned}
			\tau& =\left(\sqrt{-1}\right)^{\frac{m}{2}}\bar{c}\left(\frac{\p}{\p t}\right)\bar{c}(e_{b,1})\cdots\bar{c}\left(\frac{\p}{\p t}\right)\bar{c}(e_{b,m}) \\
			&=\left(\sqrt{-1}\right)^{\frac{m}{2}}\bar{c}(e_{b,1})\cdots\bar{c}(e_{b,m}) \\
			&=\sqrt{-1}\bar{\tau}\bar{c}\left(\frac{\p}{\p t}\right)
		\end{aligned}
	\end{equation}
	under (\ref{CliffordBundleMap}), where $\{e_{b,1},\dots,e_{b,m}\}$ is a local oriented orthonormal basis of $T(Z_b\times\{0\})$. Due to the product structure, this homomorphism extends to the entire spinor bundle $\ov{S}_b$, and we have a $\Z_2$-grading
	\begin{equation}\label{Z2GradingTwist}
		\ov{S}_b\otimes\ov{E}_b=\ov{S}_{b,+}\otimes\ov{E}_b\oplus\ov{S}_{b,-}\otimes\ov{E}_b.
	\end{equation}
	
	Under these settings, we have
	\begin{align*}
		P_{b,\pm}&\colon L^2\left(Z_b\times\{0\},\ov{S}_{b,\pm}\otimes \ov{E}_b\right)\to L^2\left(Z_b\times\{0\},\ov{S}_{b,\mp}\otimes \ov{E}_b\right), \\
		\varphi_b&\colon L^2\left(Z_b\times\{0\},\ov{S}_{b,+}\otimes\ov{E}_b\right)\to L^2\left(Z_b\times\R,\ov{S}_b\otimes\ov{E}_b\right), \\
		\psi_b&\colon L^2\left(Z_b\times\{0\},\ov{S}_{b,-}\otimes\ov{E}_b\right)\to L^2\left(Z_b\times\R,\ov{S}_b\otimes\ov{E}_b\right),
	\end{align*}
	and Figure \ref{PropDiagram1} changes into Figure \ref{PropDiagram1'} as follows.
	
	\begin{figure}[hbtp]
		\centering
		\begin{tikzcd}[row sep=large]
			L^2\left(Z_b\times\{0\},\ov{S}_{b,+}\otimes \ov{E}_b\right) \arrow[r, "\varphi_b"] \arrow[d, "P_{b,+}"']
			&L^2\left(Z_b\times\R,\ov{S}_b\otimes \ov{E}_b\right) \arrow[d, "Q_b"] \\
			L^2\left(Z_b\times\{0\},\ov{S}_{b,-}\otimes \ov{E}_b\right) \arrow[r, "\psi_b"]
			&L^2\left(Z_b\times\R,\ov{S}_b\otimes \ov{E}_b\right)
		\end{tikzcd}
		\caption{}\label{PropDiagram1'}
	\end{figure}
	
	To construct $\varphi_b$ and $\psi_b$, we first introduce some notation.
	
	Denote the Schwartz space of complex-valued functions on $\R$ by $\cS(\R)$. Let
	$$C^{\infty}_{rd}\left(Z_b\times\R,\ov{S}_b\otimes\ov{E}_b\right)\coloneq C^{\infty}\left(Z_b\times\{0\},\ov{S}_b\otimes\ov{E}_b\right)\otimes\cS(\R)$$
	be the set of rapidly decaying sections of $\ov{S}_b\otimes\ov{E}_b\to Z_b\times\R$. We already know the isomorphism
	\begin{equation}\label{L2Iso2}
		L^2\left(Z_b\times\R,\ov{S}_b\otimes\ov{E}_b\right)\cong L^2\left(Z_b\times\{0\},\ov{S}_b\otimes\ov{E}_b\right)\otimes L^2(\R)
	\end{equation}
	from (\ref{L2Iso}). Moreover, from (\ref{BundleIsometry}) and the assumption that $\ov{\mE}$ is a pullback bundle equipped with the pullback metric, (\ref{L2Iso2}) is in fact an isometry. This implies that $C^{\infty}_{rd}\big(Z_b\times\R,\ov{S}_b\otimes\ov{E}_b\big)$ is a dense subspace of  $L^2\big(Z_b\times\R,\ov{S}_b\otimes\ov{E}_b\big)$.	Similar statements hold when $\ov{S}_b$ is replaced by $\ov{S}_{b,\pm}$.	
	
	Define the map $\varphi_b$ as\footnote{More precisely, the image should be written as $e^{-\frac{1}{2}t^2}\ov{\pr}_b^*u_b$, where $\ov{\pr}_b\colon Z_b\times\R\to Z_b\times\{0\}$ is the projection. For simplicity, we omit this additional notation.}
	\begin{equation}
		\begin{aligned}
			\varphi_b\colon C^{\infty}\left(Z_b\times\{0\},\ov{S}_{b,+}\otimes \ov{E}_b\right)&\to C^{\infty}_{rd}\left(Z_b\times\R,\ov{S}_b\otimes \ov{E}_b\right) \\
			u_b\qquad\qquad\quad&\mapsto\qquad\quad e^{-\frac{1}{2}t^2}u_b.
		\end{aligned}
	\end{equation}
	It is straightforward to verify that $\varphi_b$ is well-defined, and its completion gives a bounded map from $L^2\left(Z_b \times \{0\},\ov{S}_{b,+}\otimes\ov{E}_b\right)$ to $L^2\left(Z_b\times\R,\ov{S}_b\otimes\ov{E}_b\right)$, which we continue to denote by $\varphi_b$. Similarly, we define
	\begin{equation}
		\begin{aligned}
			\psi_b\colon L^2\left(Z_b\times\{0\},\ov{S}_{b,-}\otimes \ov{E}_b\right)&\to L^2\left(Z_b\times\R,\ov{S}_b\otimes \ov{E}_b\right) \\
			v_b\qquad\qquad\quad&\mapsto\qquad \sqrt{-1}e^{-\frac{1}{2}t^2}v_b.
		\end{aligned}
	\end{equation}
	
	Next, we verify that these two maps make the diagram in Figure \ref{PropDiagram2} commute.
	
	Note that from (\ref{TauReformulate}) we see that the action $\bar{c}(e)$ interchanges the grading in (\ref{Z2GradingTwist}) for $e\in T(Z_b\times\{0\})\subset T(Z_b\times\R)$, while equation $\bar{\tau}=-\Id$ implies that $\bar{c}\left(\frac{\p}{\p t}\right)$ corresponds to scalar multiplication by $\pm\sqrt{-1}$ on $\ov{S}_{b,\pm}\otimes\ov{E}_b$, respectively. Clearly, the image of $\varphi_b$ is contained in $L^2\big(Z_b\times\R,\ov{S}_{b,+}\otimes \ov{E}_b\big)$ while the image of $\psi_b$ is contained in $L^2\big(Z_b\times\R,\ov{S}_{b,-}\otimes \ov{E}_b\big)$.
	
	Take an arbitrary $u_b$ in $C^{\infty}\left(Z_b\times\{0\},\ov{S}_{b,+}\otimes\ov{E}_b\right)$. For the upper square, when traversing clockwise, we compute using a local orthonormal basis $\{e_{b,1},\dots,e_{b,m}\}$ of $T(Z_b\times\{0\})$ that
    \begin{equation}
        \begin{aligned}
		    A_b\circ\varphi_b(u_b)&=\left(\ov{D}^{\ov{E}_b}+\sqrt{-1}t\Id\right)\left(e^{-\frac{1}{2}t^2}u_b\right) \\
		    &=\sum_{k=1}^{m}\bar{c}(e_{b,k})\ov{\nabla}^{\ov{S}_b\otimes \ov{E}_b}_{e_{b,k}}\left(e^{-\frac{1}{2}t^2}u_b\right) +\bar{c}\left(\frac{\p}{\p t}\right)\frac{\p}{\p t}\left(e^{-\frac{1}{2}t^2}u_b\right) +\sqrt{-1}t\cdot e^{-\frac{1}{2}t^2}u_b \\
		    &=e^{-\frac{1}{2}t^2}\cdot\sum_{k=1}^{m}\bar{c}(e_{b,k})\ov{\nabla}^{\ov{S}_b\otimes \ov{E}_b}_{e_{b,k}}u_b.
	    \end{aligned}
    \end{equation}
	When traversing counterclockwise, we have
    \begin{equation}
        \begin{aligned}
		    \psi_b\circ D^{E_b}_+(u_b) &=\sqrt{-1}e^{-\frac{1}{2}t^2}\cdot\sum_{k=1}^{m}\bar{c}\left(\frac{\p}{\p t}\right)\bar{c}(e_{b,k})\ov{\nabla}^{\ov{S}_b\otimes \ov{E}_b}_{e_{b,k}}u_b \\
		    &=e^{-\frac{1}{2}t^2}\cdot\sum_{k=1}^{m}\bar{c}(e_{b,k})\ov{\nabla}^{\ov{S}_b\otimes \ov{E}_b}_{e_{b,k}}u_b.
	    \end{aligned}
    \end{equation}
	
	Thus, the upper square in Figure \ref{PropDiagram2} commutes. The verification for the lower square follows in a similar manner.
	
	\begin{figure}[hbtp]
		\centering
		\begin{tikzcd}[row sep=large]
			C^{\infty}\left(Z_b\times\{0\},\ov{S}_{b,+}\otimes \ov{E}_b\right) \arrow[r, "\varphi_b"] \arrow[d, "D^{E_b}_+"']
			&C^{\infty}_{rd}\left(Z_b\times\R,\ov{S}_b\otimes \ov{E}_b\right) \arrow[d, "A_b"] \\
			C^{\infty}\left(Z_b\times\{0\},\ov{S}_{b,-}\otimes \ov{E}_b\right) \arrow[r, "\psi_b"] \arrow[d, "D^{E_b}_-"']
			&C^{\infty}_{rd}\left(Z_b\times\R,\ov{S}_b\otimes \ov{E}_b\right) \arrow[d, "A_b^*"] \\
			C^{\infty}\left(Z_b\times\{0\},\ov{S}_{b,+}\otimes \ov{E}_b\right) \arrow[r, "\varphi_b"]
			&C^{\infty}_{rd}\left(Z_b\times\R,\ov{S}_b\otimes \ov{E}_b\right)			
		\end{tikzcd}
		\caption{}\label{PropDiagram2}
	\end{figure}
	
	The commutativity of the diagram in Figure \ref{PropDiagram2} directly implies that the square in Figure \ref{PropDiagram3} also commutes. Combining this with the upper square in Figure \ref{PropDiagram2}, we conclude that the maps $\varphi_b$ and $\psi_b$ make the diagram in Figure \ref{PropDiagram1} commute. This proves $\varphi$ and $\psi$ satisfy $(a)$.
	
	\begin{figure}[hbtp]
		\centering
		\begin{tikzcd}[row sep=large]
			C^{\infty}\left(Z_b\times\{0\},\ov{S}_{b,+}\otimes \ov{E}_b\right) \arrow[r, "\varphi_b"] \arrow[d, "\Id+D^{E_b}_-D^{E_b}_+"']
			&C^{\infty}_{rd}\left(Z_b\times\R,\ov{S}_b\otimes \ov{E}_b\right) \arrow[d, "\Id+A_b^*A_b"] \\
			C^{\infty}\left(Z_b\times\{0\},\ov{S}_{b,+}\otimes \ov{E}_b\right) \arrow[r, "\varphi_b"]
			&C^{\infty}_{rd}\left(Z_b\times\R,\ov{S}_b\otimes \ov{E}_b\right)			
		\end{tikzcd}
		\caption{}\label{PropDiagram3}
	\end{figure}
	
	Let $\bar{\varphi}_b\colon\Ker P_{b,+}\to\Ker Q_b$ be the map induced by $\varphi_b$ via diagram chasing. It can be regarded as the restriction of $\varphi_b$ to $\Ker P_{b,+}$. Note that $\Ker P_{b,+}=\Ker D^{E_b}_+$ and $\Ker Q_b=\Ker A_b$, so we may simply write $\bar{\varphi}_b\colon\Ker D^{E_b}_+\to\Ker A_b$.	The injectivity of $\bar{\varphi}_b$ follows from the injectivity of $\varphi_b$. It remains to show that $\bar{\varphi}_b$ is surjective.
	
	Note that $A_b^*A_b$ preserves the $\Z_2$-grading in (\ref{Z2GradingTwist}) and
    \begin{equation}
        \begin{aligned}
		    A_b^*A_b&=\left(\ov{D}^{\ov{E}_b}\right)^2+ \sqrt{-1}\bar{c}\left(\frac{\p}{\p t}\right)+t^2\Id \\
		    &=\left(\sum_{k=1}^{m}\bar{c}(e_{b,k})\ov{\nabla}^{\ov{S}_b\otimes\ov{E}_b}_{e_{b,k}}\right)^2-\frac{\p^2}{\p t^2}+ \sqrt{-1}\bar{c}\left(\frac{\p}{\p t}\right)+t^2\Id \\
		    &=\left\{
		    \begin{aligned}
		    	&\left(\sum_{k=1}^{m}\bar{c}(e_{b,k})\ov{\nabla}^{\ov{S}_b\otimes\ov{E}_b}_{e_{b,k}}\right)^2 -\frac{\p^2}{\p t^2}+t^2\Id-\Id, & \text{on}\ C^{\infty}\left(Z_b\times\R,\ov{S}_{b,+}\otimes\ov{E}_b\right), \\
		    	&\left(\sum_{k=1}^{m}\bar{c}(e_{b,k})\ov{\nabla}^{\ov{S}_b\otimes\ov{E}_b}_{e_{b,k}}\right)^2-\frac{\p^2}{\p t^2}+t^2\Id+\Id, & \text{on}\ C^{\infty}\left(Z_b\times\R,\ov{S}_{b,-}\otimes\ov{E}_b\right), \\
		    \end{aligned}			
	    	\right.
	    \end{aligned}
    \end{equation}
	for a local orthonormal basis $\{e_{b,1},\dots,e_{b,m}\}$ of $T(Z_b\times\{0\})$. We know that the operator $\left(\sum\limits_{k=1}^{m}\bar{c}(e_{b,k})\ov{\nabla}^{\ov{S}_b\otimes\ov{E}_b}_{e_{b,k}} \right)^2$ is globally defined and nonnegative, and the harmonic oscillator $-\frac{\p^2}{\p t^2}+t^2\Id-\Id$ on $\R$ is nonnegative and has a one-dimensional kernel spanned by $e^{-\frac{1}{2}t^2}$, while $-\frac{\p^2}{\p t^2}+t^2\Id+\Id$ is strictly positive \cite[Theorem 1.5.1]{GJ1987}. Therefore, any element in $\Ker A_b=\Ker A_b^*A_b$ must have the form $e^{-\frac{1}{2}t^2}u_b$, where
	$$u_b\in C^{\infty}\left(Z_b\times\{0\},\ov{S}_{b,+}\otimes\ov{E}_b\right)\bigcap\Ker\left(\sum_{k=1}^{m}\bar{c}(e_{b,k})\ov{\nabla}^{\ov{S}_b\otimes\ov{E}_b}_{e_{b,k}}\right)=\Ker D^{E_b}_+.$$
	From this, we conclude that $\bar{\varphi}_b$ is surjective and thus an isomorphism.
	
	Now consider the map $\bar{\psi}_b\colon\Coker P_{b,+}\to\Coker Q_b$ induced by $\psi_b$. Since
	\begin{equation}
		\Coker P_{b,+}\cong\Ker P_{b,-}=\Ker D^{E_b}_-,\quad\Coker Q_b\cong\Ker Q_b^*=\Ker A_b^*
	\end{equation}
	and
	\begin{equation}
		\psi_b\left(\Ker D^{E_b}_-\right)\subseteq\Ker A_b^*,
	\end{equation}
	we may identify $\bar{\psi}_b$ with the restriction $\psi_b|_{\Ker D^{E_b}_-}\colon\Ker D^{E_b}_-\to\Ker A_b^*$. A similar argument to the one for $\bar{\varphi}_b$ then establishes that $\bar{\psi}_b$ is an isomorphism. It follows that $\varphi$ and $\psi$ satisfy $(b)$.
	
	By Lemma \ref{IndBunLem}, we complete the proof of this proposition. 
\end{proof}

In Proposition \ref{FamilyProductFormula1}, the Hermitian vector bundle $\ov{\mE}\to N\times\R$ that we consider has a product structure. In fact, we can extend this proposition to apply to a bundle with an almost product structure.

\begin{thm}[Family product formula for even-dimensional $Z$]\label{FamilyProductFormula2}
	Let the fiber bundles $(N,g)\to B$ and $\mS\to N$ be as defined at the beginning of this subsection. Suppose the fiber $Z$ of $N\to B$ is even-dimensional. Consider an Hermitian vector bundle $\left(\mE,h^{\mE}\right)$ over $N$ with an Hermitian connection $\nabla^{\mE}$, and denote the fiber of $\mE$ over $b\in B$ by $E_b$. This gives rise to the corresponding family of twisted Dirac operators $D^{\mE}_+=\big\{D^{E_b}_+\big\}_{b\in B}$ and an index bundle $\Ind\left(D^{\mE}_+\right)\in K^0(B)$.
	
	Let $\big(\ov{\mE},h^{\ov{\mE}}\big)\to(N\times\R,\bar{g})$ be a bundle with an almost product structure of the type $\big(\mE,h^{\mE}\big)\to(N,g)$, and let $\nabla^{\ov{\mE}}$ be an Hermitian connection on it that agrees with the pullback of $\nabla^{\mE}$ outside a compact subset of $N\times\R$. Denote the spinor bundle of $T^V(N\times\R)$ corresponding to $\bar{g}$, which satisfies $\bar{\tau}=-\Id$, by $\ov{\mS}\to N\times\R$. Then we obtain a family of twisted Dirac operators $\ov{D}^{\ov{\mE}}=\Big\{\ov{D}^{\ov{E}_b}\Big\}_{b\in B}$ associated with $\ov{\mE}$, where $\ov{E}_b$ is the fiber of $\ov{\mE}$ over $b\in B$. In this case, the bundle $\ov{\mS}\otimes\ov{\mE}$ has an almost product structure, and the family of Callias-type operators
	\begin{equation}\label{A_bEven2}
		A=\Big\{A_b\coloneq\ov{D}^{\ov{E}_b}+\sqrt{-1}t\Id\Big\}_{b\in B}
	\end{equation}
	induces an index bundle $\Ind(A)\in K^0(B)$.
	
	Then we have:
	\begin{equation}
		\Ind(A)=\Ind\left(D^{\mE}_+\right).
	\end{equation}
\end{thm}

\begin{proof}
	We can continuously deform the metric $\bar{g}$ to the product metric $\pr^*g\oplus\dd t\otimes\dd t$, and simultaneously deform the Hermitian metric $h^{\ov{\mE}}$ and the Hermitian connection $\nabla^{\ov{\mE}}$ to the pullback metric $\pr^*h^{\mE}$ and the pullback connection $\pr^*\nabla^{\mE}$, respectively, where $\pr\colon N\times\R\to N$ is the projection. This deformation only affects the data on a compact subset of $N\times\R$, so the bundle $\mL^2\big(N\times\R,\ov{\mS}\otimes\ov{\mE}\big)$ remains isomorphic, and $\Big\{\frac{A_b}{\sqrt{\Id+A_b^*A_b}}\Big\}_{b\in B}$ remains a continuous family of Fredholm operators. As a result, the index bundle $\Ind(A)\in K^0(B)$ is invariant under such a deformation. Therefore, this theorem follows from Proposition \ref{FamilyProductFormula1}.
\end{proof}

\subsection{The Case of Odd-Dimensional $Z$}\label{Subsection2.2}
Let $\pi\colon N\to B$ be a fiber bundle as defined in Subsection \ref{Subsection2.1}, but its fiber $Z$ is of odd dimension $m$. Equip $T^VN$ with the metric $g$. Recall from Subsection \ref{Subsection1.2} that there are two choices of the spinor bundle $\mS=S(T^VN)$ over $N$. We choose the one satisfying $\tau=-\Id$.

Consider an Hermitian vector bundle $\mE$ over $N$, equipped with an Hermitian connection $\nabla^{\mE}$. We still denote by $D^{\mE}\coloneq\big\{D^{E_b}\big\}_{b\in B}$ the family of twisted Dirac operators associated with the twisted spinor bundle $\mS\otimes\mE$. This family consists of formally self-adjoint elliptic differential operators, and therefore defines an index bundle $\Ind\big(D^{\mE}\big)\in K^1(B)$.

For the fiber bundle $N\times\R\to B$, we equip its vertical tangent bundle with the same product metric and orientation as before. Let $\ov{\mE}$ be the pullback bundle with the pullback metric and pullback connection. Then we obtain the corresponding twisted spinor bundle $\ov{\mS}\otimes\ov{\mE}$. Since $Z\times\R$ is even-dimensional, there is a $\Z_2$-grading
\begin{equation}
    \ov{\mS}\otimes\ov{\mE}=\ov{\mS}_+\otimes\ov{\mE}\oplus\ov{\mS}_-\otimes\ov{\mE}
\end{equation}
induced by $\bar{\tau}$ in (\ref{taubar}). As established in Subsection \ref{Subsection1.2}, $\ov{\mS}\otimes\ov{\mE}$ admits an almost product structure.

We continue to denote by $\ov{D}^{\ov{\mE}}=\Big\{\ov{D}^{\ov{E}_b}\Big\}_{b\in B}$ the family of twisted Dirac operators associated with $\ov{\mS}\otimes\ov{\mE}$. In the case of odd-dimensional $Z$, we use the following deformation of $\ov{D}^{\ov{\mE}}$:
\begin{equation}\label{A_bOdd}
	\left\{A_b\coloneq\ov{D}^{\ov{E}_b}+t\bar{\tau}\right\}_{b\in B},
\end{equation}
where $t$ is the function induced by the projection $N\times\R\to\R$ as previously defined.

Each operator in the family \eqref{A_bOdd} is formally self-adjoint. Since
\begin{equation}
	A_b^2=\left(\ov{D}^{\ov{E}_b}\right)^2+\bar{c}\left(\frac{\p}{\p t}\right)\bar{\tau}+t^2,
\end{equation}
we conclude that $A_b$ is positive at infinity. By an argument similar to that in Subsection \ref{Subsection2.1}, the family \eqref{A_bOdd} defines an index bundle $\Ind(A)\in K^1(B)$.

The analogous result to Proposition \ref{FamilyProductFormula1} for odd-dimensional $Z$ is stated as follows.

\begin{prop}\label{FamilyProductFormula1'}
	Let $\Ind(A)$ and $\Ind\big(D^{\mE}\big)$ be defined as above. Then
	\begin{equation}
		\Ind(A)=\Ind\left(D^{\mE}\right)\in K^1(B).
	\end{equation}
\end{prop}

Following the proof strategy of Proposition \ref{FamilyProductFormula1}, we first extend Lemma \ref{IndBunLem} to the $K^1$ case.

Let $\mH_1$ and $\mH_2$ be Hilbert bundles over a connected closed manifold $B$, with fibers $H_{1,b}$ and $H_{2,b}$ over $b\in B$, respectively. Consider continuous families of self-adjoint Fredholm operators
$$P=\left\{P_b\colon H_{1,b}\to H_{1,b}\right\}_{b\in B},\quad Q=\left\{Q_b\colon H_{2,b}\to H_{2,b}\right\}_{b\in B}.$$
Suppose there is a continuous family of bounded linear operators
$$\varphi=\left\{\varphi_b\colon H_{1,b}\to H_{2,b}\right\}_{b\in B}$$
making the diagram in Figure \ref{IndBunLem'Fig1} commute for all $b\in B$. Then, by diagram chasing, we obtain an induced map $\mu_b\colon\Ker P_b\to\Ker Q_b$.

\begin{figure}[hbtp]
	\centering
	\begin{tikzcd}[row sep=large, column sep=large]
		H_{1,b} \arrow[d, "\varphi_b"'] \arrow[r, "P_b"]
		&H_{1,b} \arrow[d, "\varphi_b"]
		\\
		H_{2,b} \arrow[r, "Q_b"]
		&H_{2,b}
	\end{tikzcd}
	\caption{}\label{IndBunLem'Fig1}
\end{figure}

\begin{lemma}\label{IndBunLem'}
	If the map $\mu_b$ is an isomorphism for all $b\in B$, then the index bundles determined by $P$ and $Q$ coincide, i.e.,
	\begin{equation}
		\Ind(P)=\Ind(Q)\in K^1(B).
	\end{equation}
\end{lemma}

The proof of this lemma is also given in Appendix.

We now proceed to prove Proposition \ref{FamilyProductFormula1'}.

\begin{proof}[Proof of Proposition \ref{FamilyProductFormula1'}]
	Write
	$$P_b=\frac{D^{E_b}}{\sqrt{\Id+\left(D^{E_b}\right)^2}},\quad Q_b=\frac{A_b}{\sqrt{\Id+A_b^2}}.$$
	Similar to (\ref{DiracReformulate}), we identify $D^{E_b}$ as
	\begin{equation}
		D^{E_b}=\sum_{k=1}^{m} \bar{c}\left(\frac{\p}{\p t}\right)\bar{c}(e_{b,k})\ov{\nabla}^{\ov{S}_b\otimes\ov{E}_b}_{e_{b,k}}
	\end{equation}
	acting on $C^{\infty}\left(Z_b\times\{0\},\ov{S}_{b,+}\otimes\ov{E}_b\right)$ for a local orthonormal basis $\{e_{b,1},\dots,e_{b,m}\}$ of $T(Z_b\times\{0\})$, via the map \eqref{CliffordBundleMap} and the isomorphism \eqref{BundleIsomorphism'}.
	
	Define the linear map
	\begin{equation}
		\begin{aligned}
			\varphi_b\colon L^2\left(Z_b\times\{0\},\ov{S}_{b,+}\otimes\ov{E}_b\right)&\to \quad L^2\left(Z_b\times\R,\ov{S}_b\otimes\ov{E}_b\right) \\
			u_b\quad\qquad\quad&\mapsto e^{-\frac{1}{2}t^2}u_b-\bar{c}\left(\frac{\p}{\p t}\right)e^{-\frac{1}{2}t^2}u_b.
		\end{aligned}
	\end{equation}
	We first verify that the diagram in Figure \ref{Prop'Diagram1} commutes.
	
	\begin{figure}[hbtp]
		\centering
		\begin{tikzcd}[row sep=large]
			C^{\infty}\left(Z_b\times\{0\},\ov{S}_{b,+}\otimes\ov{E}_b\right) \arrow[r, "\varphi_b"] \arrow[d, "D^{E_b}"']
			&C^{\infty}_{rd}\left(Z_b\times\R,\ov{S}_b\otimes\ov{E}_b\right) \arrow[d, "A_b"] \\
			C^{\infty}\left(Z_b\times\{0\},\ov{S}_{b,+}\otimes\ov{E}_b\right) \arrow[r, "\varphi_b"]
			&C^{\infty}_{rd}\left(Z_b\times\R,\ov{S}_b\otimes\ov{E}_b\right)
		\end{tikzcd}
		\caption{}\label{Prop'Diagram1}
	\end{figure}
	
	For $u_b\in C^{\infty}\left(Z_b\times\{0\},\ov{S}_{b,+}\otimes\ov{E}_b\right)$, when traversing clockwise, we have
    \begin{equation}\label{Clockwise'}
        \begin{aligned}
		    &A_b\circ\varphi_b(u_b) \\
		    =& \left(\sum_{k=1}^{m}\bar{c}(e_{b,k})\ov{\nabla}^{\ov{S}_b\otimes\ov{E}_b}_{e_{b,k}}+\bar{c}\left(\frac{\p}{\p t}\right)\frac{\p}{\p t}+\tau t\right)\left(e^{-\frac{1}{2}t^2}u_b-\bar{c}\left(\frac{\p}{\p t}\right)e^{-\frac{1}{2}t^2}u_b\right) \\
		    =& e^{-\frac{1}{2}t^2}\sum_{k=1}^{m}\bar{c}(e_{b,k})\ov{\nabla}^{\ov{S}_b\otimes\ov{E}_b}_{e_{b,k}}u_b-\bar{c}\left(\frac{\p}{\p t}\right)te^{-\frac{1}{2}t^2}u_b+te^{-\frac{1}{2}t^2}u_b \\
		    &+e^{-\frac{1}{2}t^2}\bar{c}\left(\frac{\p}{\p t}\right)\sum_{k=1}^{m}\bar{c}(e_{b,k})\ov{\nabla}^{\ov{S}_b\otimes\ov{E}_b}_{e_{b,k}}u_b-te^{-\frac{1}{2}t^2}u_b+\bar{c}\left(\frac{\p}{\p t}\right)te^{-\frac{1}{2}t^2}u_b \\
		    =& e^{-\frac{1}{2}t^2}\sum_{k=1}^{m}\bar{c}(e_{b,k})\ov{\nabla}^{\ov{S}_b\otimes\ov{E}_b}_{e_{b,k}}u_b+\bar{c}\left(\frac{\p}{\p t}\right)\left(e^{-\frac{1}{2}t^2}\sum_{k=1}^{m}\bar{c}(e_{b,k})\ov{\nabla}^{\ov{S}_b\otimes\ov{E}_b}_{e_{b,k}}u_b\right)
	    \end{aligned}
    \end{equation}
	for a local orthonormal frame $\{e_{b,1},\dots,e_{b,m}\}$ of $T(Z_b\times\{0\})\subset T(Z_b\times\R)$. When traversing counterclockwise, it follows
    \begin{equation}\label{Conterclockwise'}
        \begin{aligned}
		    &\varphi_b\circ D^{E_b}(u_b) \\
		    =& e^{-\frac{1}{2}t^2}\sum_{k=1}^{m}\bar{c}\left(\frac{\p}{\p t}\right)\bar{c}(e_{b,k})\ov{\nabla}^{\ov{S}_b\otimes\ov{E}_b}_{e_{b,k}}u_b-\bar{c}\left(\frac{\p}{\p t}\right)e^{-\frac{1}{2}t^2}\sum_{k=1}^{m}\bar{c}\left(\frac{\p}{\p t}\right)\bar{c}(e_{b,k})\ov{\nabla}^{\ov{S}_b\otimes\ov{E}_b}_{e_{b,k}}u_b \\
		    =& e^{-\frac{1}{2}t^2}\sum_{k=1}^{m}\bar{c}(e_{b,k})\ov{\nabla}^{\ov{S}_b\otimes\ov{E}_b}_{e_{b,k}}u_b+\bar{c}\left(\frac{\p}{\p t}\right)\left(e^{-\frac{1}{2}t^2}\sum_{k=1}^{m}\bar{c}(e_{b,k})\ov{\nabla}^{\ov{S}_b\otimes\ov{E}_b}_{e_{b,k}}u_b\right).
	    \end{aligned}
    \end{equation}
	
	Then (\ref{Clockwise'}) and (\ref{Conterclockwise'}) show that the diagram in Figure \ref{Prop'Diagram1} commutes. Furthermore, by an argument similar to the one used to prove the commutativity of Figure \ref{PropDiagram1'}, this also implies that the diagram in Figure~\ref{Prop'Diagram3} is commutative.
    
	\begin{figure}[hbtp]
		\centering
		\begin{tikzcd}[row sep=large]
			L^2\left(Z_b\times\{0\},\ov{S}_{b,+}\otimes\ov{E}_b\right) \arrow[r, "\varphi_b"] \arrow[d, "P_b"']
			&L^2\left(Z_b\times\R,\ov{S}_b\otimes\ov{E}_b\right) \arrow[d, "Q_b"] \\
			L^2\left(Z_b\times\{0\},\ov{S}_{b,+}\otimes\ov{E}_b\right) \arrow[r, "\varphi_b"]
			&L^2\left(Z_b\times\R,\ov{S}_b\otimes\ov{E}_b\right)
		\end{tikzcd}
		\caption{}\label{Prop'Diagram3}
	\end{figure}
	
	Now by Lemma \ref{IndBunLem'}, it is sufficient to prove that $\varphi_b$ induces an isomorphism
	$$\bar{\varphi}_b\colon\Ker P_b=\Ker D^{E_b}\to\Ker Q_b=\Ker A_b$$
	for all $b\in B$. In fact, $\bar{\varphi}_b$ is the restriction of $\varphi_b$ to $\Ker D^{E_b}$. Since $\varphi_b$ is easily seen to be injective, which implies $\bar{\varphi}_b$ is injective, we only need to prove the surjectivity of $\bar{\varphi}_b$.
	
	Consider the endomorphism $\bar{c}\big(\frac{\p}{\p t}\big)\circ\bar{\tau}$ on the bundle $\ov{S}_b\otimes\ov{E}_b$. Since
	\begin{equation}
		\left(\bar{c}\left(\frac{\p}{\p t}\right)\circ\bar{\tau}\right)^2=\Id,
	\end{equation}
	we obtain another $\Z_2$-grading on $\ov{S}_b\otimes\ov{E}_b$, namely
	\begin{equation}
		\ov{S}_b\otimes\ov{E}_b=\triangle_{b,+}\oplus\triangle_{b,-},
	\end{equation}
	where $\triangle_{b,\pm}$ are the eigenspaces corresponding to the eigenvalues $\pm1$ of $\bar{c}\big(\frac{\p}{\p t}\big)\circ\bar{\tau}$.
	
	It is straightforward to verify the following properties of this grading:
	\begin{itemize}
		\item[$(a)$]
		\begin{equation}
			\begin{aligned}
				\triangle_{b,+}= & \left\{u_b+\bar{c}\left(\frac{\p}{\p t}\right)u_b\colon u_b\in\ov{S}_{b,+}\otimes\ov{E}_b\right\} \\
                =&\left\{v_b-\bar{c}\left(\frac{\p}{\p t}\right)v_b\colon v_b\in\ov{S}_{b,-}\otimes\ov{E}_b\right\}, \\
				\triangle_{b,-}= & \left\{u_b-\bar{c}\left(\frac{\p}{\p t}\right)u_b\colon u_b\in\ov{S}_{b,+}\otimes\ov{E}_b\right\} \\
                =&\left\{v_b+\bar{c}\left(\frac{\p}{\p t}\right)v_b\colon v_b\in\ov{S}_{b,-}\otimes\ov{E}_b\right\};
			\end{aligned}
		\end{equation}
		\item[$(b)$] $\bar{c}\left(\frac{\p}{\p t}\right)$ and $\bar{\tau}$ interchanges $\triangle_{b,+}$ and $\triangle_{b,-}$, whereas $\bar{c}(e)$ for any $e\in T^V(N\times\R)$ orthogonal to $\frac{\p}{\p t}$ preserves this grading;
		\item[$(c)$] The operator $\sum\limits_{k=1}^{m}\bar{c}(e_{b,k})\ov{\nabla}^{\ov{S}_b\otimes\ov{E}_b}_{e_{b,k}}$ preserves this grading, where $\{e_{b,1},\dots,e_{b,m}\}$ is a local orthonormal frame of $T(Z_b\times\{0\})$;
		\item[$(d)$] $\Image\varphi_b\subset L^2\left(Z_b\times\R,\triangle_{b,-}\right)$.
	\end{itemize}
	
	From these properties, we compute for a local orthonormal frame $\big\{e_{b,1},\dots,e_{b,m},\frac{\p}{\p t}\big\}$ of $T(Z_b\times\R)$ that
    \begin{equation}
        \begin{aligned}
		    \left(A_b\right)^2= & \left(\sum_{k=1}^{m}\bar{c}(e_{b,k})\ov{\nabla}^{\ov{S}_b\otimes\ov{E}_b}_{e_{b,k}}+\bar{c}\left(\frac{\p}{\p t}\right)\frac{\p}{\p t}+\bar{\tau}t\right)^2 \\
    		=& \left(\sum_{k=1}^{m}\bar{c}(e_{b,k})\ov{\nabla}^{\ov{S}_b\otimes\ov{E}_b}_{e_{b,k}}\right)^2-\frac{\p^2}{\p t^2}+t^2+\bar{c}\left(\frac{\p}{\p t}\right)\bar{\tau} \\
    		=&\left\{
	    	\begin{aligned}
			    &\left(\sum_{k=1}^{m}\bar{c}(e_{b,k})\ov{\nabla}^{\ov{S}_b\otimes\ov{E}_b}_{e_{b,k}}\right)^2-\frac{\p^2}{\p t^2}+t^2+\Id, & \text{on}\ C^{\infty}\left(Z_b\times\R,\triangle_{b,+}\right), \\
			    &\left(\sum_{k=1}^{m}\bar{c}(e_{b,k})\ov{\nabla}^{\ov{S}_b\otimes\ov{E}_b}_{e_{b,k}}\right)^2-\frac{\p^2}{\p t^2}+t^2-\Id, & \text{on}\ C^{\infty}\left(Z_b\times\R,\triangle_{b,-}\right). \\
		    \end{aligned}			
		    \right.
	    \end{aligned}
    \end{equation}

	Analogous to the proof of Proposition \ref{FamilyProductFormula1}, we conclude that any element in $\Ker A_b=\Ker A_b^2$ is of the form $e^{-\frac{1}{2}t^2}u_b-\bar{c}\left(\frac{\p}{\p t}\right)e^{-\frac{1}{2}t^2}u_b$, where
	$$u_b\in C^{\infty}\left(Z_b\times\{0\},\ov{S}_{b,+}\otimes\ov{E}_b\right)\bigcap\Ker \left(\sum_{k=1}^{m}\bar{c}(e_{b,k})\ov{\nabla}^{\ov{S}_b\otimes\ov{E}_b}_{e_{b,k}}\right).$$
	It lies in the image of $\bar{\varphi}_b=\varphi_b\big|_{\Ker D^{E_b}}$. From this, we conclude that $\bar{\varphi}_b$ is surjective and thus an isomorphism.
	
	By Lemma \ref{IndBunLem'}, we complete the proof of this proposition.
\end{proof}

The same argument as in the proof of Theorem \ref{FamilyProductFormula2} immediately yields the following theorem.

\begin{thm}[Family product formula for odd-dimensional $Z$]\label{FamilyProductFormula2'}
    Let the fiber bundles $(N,g)\to B$ and $\mS\to N$ be defined as before. Suppose the fiber $Z$ of $N\to B$ is odd-dimensional. Consider an Hermitian vector bundle $\big(\mE,h^{\mE}\big)$ over $N$ with an Hermitian connection $\nabla^{\mE}$, and denote the fiber of $\mE$ over $b\in B$ by $E_b$. This gives rise to the corresponding family of twisted Dirac operators $D^{\mE}=\big\{D^{E_b}\big\}_{b\in B}$ and an index bundle $\Ind\big(D^{\mE}\big)\in K^1(B)$.

	Let $\big(\ov{\mE},h^{\ov{\mE}}\big)\to(N\times\R,\bar{g})$ be a bundle with an almost product structure of the type $\big(\mE,h^{\mE}\big)\to(N,g)$, and let $\nabla^{\ov{\mE}}$ be an Hermitian connection on it that agrees with the pullback of $\nabla^{\mE}$ outside a compact subset of $N\times\R$. Denote the spinor bundle of $T^V(N\times\R)$ corresponding to $\bar{g}$ by $\ov{\mS}\to N\times\R$, with an endomorphism $\bar{\tau}$ defined in (\ref{taubar}). Then we obtain a family of twisted Dirac operators $\ov{D}^{\ov{\mE}}=\Big\{\ov{D}^{\ov{E}_b}\Big\}_{b\in B}$ associated with $\ov{\mE}$. In this case, the bundle $\ov{\mS}\otimes\ov{\mE}$ has an almost product structure, and the family of Callias-type operators
	\begin{equation}\label{A_bOdd2}
		A=\left\{A_b\coloneq\ov{D}^{\ov{E}_b}+t\bar{\tau}\right\}_{b\in B}
	\end{equation}
	induces an index bundle $\Ind(A)\in K^1(B)$.
	
	Then we have:
	\begin{equation}
		\Ind(A)=\Ind\left(D^{\mE}\right).
	\end{equation}
\end{thm}

At the end of this section, we present a corollary formulated in the cohomological setting. Let $\pi\colon N\to B$ be a fiber bundle with closed fiber $Z$ and oriented spin vertical tangent bundle $T^VN$. Assume that $\mE$ is an Hermitian vector bundle over $N$. By a slight abuse of notation, denote by $A$ the family of first-order elliptic differential operators defined by \eqref{A_bEven2} when $\dim Z$ is even, and by \eqref{A_bOdd2} when $\dim Z$ is odd. Recall that the Chern character $\ch$ maps $K^*(B)$ to the cohomology group $H^*(B,\C)$. Then, the Atiyah--Singer family index theorem \cite[Theorem 5.1]{ASIV}, together with Theorem \ref{FamilyProductFormula2} and Theorem \ref{FamilyProductFormula2'}, immediately implies the following:
	
\begin{cor}\label{CohomologyCor}
	We have
	\begin{equation}
		\ch(\Ind(A))=\int_{N/B}\widehat{A}\left(T^VN\right)\ch(\mE)\in H^*(B,\C).
	\end{equation}
\end{cor}

\section{Family Band Width Estimate}\label{Section3}
In this section, we apply the family product formula (Theorem \ref{FamilyProductFormula2} and Theorem \ref{FamilyProductFormula2'}) to prove the main result of this article (Theorem \ref{MainTheorem}).

Let $\pi\colon N\to B$ be a fiber bundle with closed fiber $Z$, where the vertical tangent bundle $T^VN$ is oriented and spin. Denote the fiber over $b\in B$ by $Z_b$. As stated in Subsection \ref{Subsection1.2}, given a Euclidean metric $g$ on $T^VN$, we obtain an Hermitian vector bundle $\mS\coloneq S\big(T^VN\big)$ over $N$, which restricts to the spinor bundle $S_b\to Z_b$ associated with the restriction metric $g_b$ of $g$ to $TZ_b$, for each $b\in B$. The Clifford bundle $\Cl(T^VN)$ acts on $\mS$ via the Clifford multiplication $c(\cdot)$. When $\dim Z$ is even, there is a $\Z_2$-grading $\mS=\mS_+\oplus\mS_-$ induced by the endomorphism $\tau$ defined in (\ref{tau}); for odd $\dim Z$, we choose $\mS$ such that $\tau=-\Id$.

For any Hermitian vector bundle $\mE\to N$ with an Hermitian connection $\nabla^{\mE}$, there is a corresponding twisted spinor bundle $\mS\otimes\mE$. When $\dim Z$ is even, this bundle admits a $\Z_2$-grading:
\begin{equation}\label{FamilyTwistedGrading}
	\mS\otimes\mE=\mS_+\otimes\mE\oplus\mS_-\otimes\mE.
\end{equation}
We denote the fiber of $\mE\to N\to B$ over $b\in B$ by $E_b$ as before. Let $D^{\mE}\coloneq\big\{D^{E_b}\big\}_{b\in B}$ be the associated family of twisted Dirac operators, where $D^{E_b}$ is defined as in (\ref{TwistedDiracOp}). For even $\dim Z$, the grading (\ref{FamilyTwistedGrading}) induces a decomposition 
\begin{equation}
	D^{\mE}=
	\begin{pmatrix}
		0 & D^{\mE}_- \\
		D^{\mE}_+ & 0
	\end{pmatrix}.
\end{equation}

As introduced in Section \ref{Section2}, when $\dim Z$ is even, the family $\big\{D^{E_b}_+\big\}_{b\in B}$ defines an index bundle $\Ind\big(D^{\mE}_+\big)\in K^0(B)$. While for odd $\dim Z$, the family $\big\{D^{E_b}\big\}_{b\in B}$ consists of formally self-adjoint operators and defines an index bundle $\Ind\big(D^{\mE}\big)\in K^1(B)$. With slight notational abuse, we summarize both cases by stating that $D^{\mE}$ defines an index bundle $\Ind\big(D^{\mE}\big)\in K^{\dim Z}(B)$.

\begin{defn}\label{InfiniteAhatArea}
	Let $\pi\colon(N,g)\to B$ be a fiber bundle defined as above. It is said to have infinite family $\widehat{A}$-area if for every $\varepsilon>0$, there exists an Hermitian vector bundle $\mE\to N$ with an Hermitian connection $\nabla^{\mE}$, whose natural restrictions to the fibers of $\pi$ are denoted by $\big\{\nabla^{E_b}\big\}_{b\in B}$, such that $\big\|R^{E_b}\big\|_{L^\infty}\leqslant\varepsilon$ for each $b\in B$ and the index bundle $\Ind\big(D^{\mE}\big)\neq0\in K^{\dim Z}(B)$, where
	$$R^{E_b}\coloneq\left(\nabla^{E_b}\right)^2\in\Omega^2(Z_b,\End(E_b))$$
	is the curvature.
\end{defn}

Since $Z$ and $B$ are closed, it is easy to see that the property of having infinite family $\widehat{A}$-area is independent of the choices of $g$. Consequently, we can simply state that the fiber bundle $N\to B$ has infinite family $\widehat{A}$-area.

\begin{ex}\label{InfiniteAhatAeraExample}
	We present several examples of fiber bundles with infinite family $\widehat{A}$-area. In what follows, the notation $\pi\colon N \to B$ always denotes a fiber bundle introduced at the beginning of this section and $Z$ denotes its fiber.
	\begin{itemize}
		\item[$(a)$] Suppose there exists an Hermitian vector bundle $\mE\to N$ equipped with an Hermitian connection $\nabla^{\mE}$ such that the associated family of twisted Dirac operators has non-vanishing index bundle in $K^{\dim Z}(B)$, and for each $b\in B$ the restriction connection $\nabla^{E_b}$ is flat. Then $\pi$ has infinite family $\widehat{A}$-area.
			
		\item[$(b)$] Another sufficient condition for $N\to B$ to have infinite family $\widehat{A}$-area is that the index bundle $\Ind(D)\in K^{\dim Z}(B)$ of the family of untwisted Dirac operators does not vanish. This is a special case of $(a)$, where one takes $\mE$ to be the trivial line bundle and $\nabla^{\mE}$ the trivial connection. An explicit example of such bundles is given in \cite[Theorem 3.2]{BE2023}.
			
		\item[$(c)$] If either the $\widehat{A}$-genus of the fiber $Z$ does not vanish, i.e. $\widehat{A}(Z)\neq0$, or the total space $N$ is spin with $\widehat{A}(N)\neq0$, then the bundle $\pi$ has infinite family $\widehat{A}$-area. Indeed, either condition implies $\Ind(D)\neq0$.
			
		\item[$(d)$] Hanke, Schick and Steimle constructed a fiber bundle $P\to S^{k}$ with fiber $F$ in \cite[Theorem 1.4 and Remark 1.5]{HSS2014}, where $F$ admits a PSC metric, yet $P$ is spin with $\widehat{A}(P)\neq0$. In the case $k=4$, Krannich, Kupers and Randal-Williams provided an explicit construction of a fiber bundle $E\to S^4$ with fiber $\HP^2$, the $2$-dimensional quaternionic projective space, satisfying the same properties \cite[Theorem]{KKR2021}. By part $(c)$, these bundles have infinite family $\widehat{A}$-area.
		
		\item[$(e)$] Consider the product bundle $E\times S^2 \to S^4\times S^2$, obtained by pulling back the bundle $E\to S^4$ in $(d)$ along the projection $S^4\times S^2 \to S^4$. The manifold $S^2$ admits a PSC metric, and hence so does the total space $E\times S^2$. Moreover, since the family index $\Ind(D)$ of the bundle $E\to S^4$ is non-vanishing, the pullback bundle $E\times S^2\to S^4\times S^2$ also has non-vanishing family index, and therefore has infinite family $\widehat{A}$-area.
	\end{itemize}
\end{ex}

There is another important example of a fiber bundle with infinite family $\widehat{A}$-area, whose total space, base, and individual fibers all admit PSC metrics.

\begin{ex}\label{InfiniteAhatAeraExample2}
	Let $p\colon E\to S^4$ be the fiber bundle with fiber $\HP^2$ constructed in \cite{KKR2021}. Denote by $q$ the fiber bundle $\CP^3\to S^4=\HP^1$, which is known to have fiber $S^2$ and structure group $SO(3)$. Pulling back $E$ via $q$, we obtain a fiber bundle $\pi\colon q^*E\to \CP^3$ with fiber $\HP^2$. Since the induced map $q^*\colon H^4(S^4,\C)\to H^4(\CP^3,\C)$ between the cohomology groups is an isomorphism, it follows from the discussion in Example \ref{InfiniteAhatAeraExample} that the index of the family of untwisted Dirac operators associated with $\pi$ does not vanish. Hence, $\pi$ has infinite family $\widehat{A}$-area.

	On the other hand, the total space $q^*E$ admits a PSC metric. Indeed, $q^*E$ can also be regarded as the pullback of the bundle $q$ via $p$ (see Figure \ref{ExampleDiagram1}). Thus, $q^*E\to E$ is an $S^2$-bundle with structure group $SO(3)$, and therefore admits a metric of fiberwise PSC. By applying the O'Neill formulas \cite[Chapter 9]{Besse1987}, one can construct a PSC metric on the total space $q^*E$.

	\begin{figure}[hbtp]
		\centering
		\begin{tikzcd}[row sep=large]
			q^*E \arrow[r] \arrow[d, "\pi"']
			&E \arrow[d, "p"] \\
			\CP^3 \arrow[r, "q"]
			&S^4
		\end{tikzcd}
		\caption{}\label{ExampleDiagram1}
	\end{figure}
\end{ex}

\begin{ex}
	In this example, we construct fiber bundles with infinite family $\widehat{A}$-area, but with vanishing family index of untwisted family Dirac operators. More generally, we prove that if $N\to B$ is a fiber bundle as in Example \ref{InfiniteAhatAeraExample} (b), then for any even-dimensional compactly enlargeable spin manifold $Y$ introduced in \cite[Section 5]{GL1983}, the fiber bundle $N\times Y\to B$ with fiber $Z\times Y$ has infinite family $\widehat{A}$-area. In particular, for $P\to S^k$ in Example \ref{InfiniteAhatAeraExample} (d) and $q^*E\to\C P^3$ in Example \ref{InfiniteAhatAeraExample2}, the fiber bundles $P\times T^{2l}\to S^k$ and $q^*E\times T^{2l}\to\C P^3$ both have infinite family $\widehat{A}$-area, where $T^{2l}$ denotes the $2l$-dimensional torus. It is easy to see that the family indices of untwisted family Dirac operators in these two examples vanish.

	Since $Y$ is compactly enlargeable, for any $\varepsilon>0$, there exists an Hermitian vector bundle $F\to Y$ with an Hermitian connection $\nabla^F$ such that $\|R^F\|_{L^{\infty}}\leqslant\varepsilon$ and
	\begin{equation}
		\int_Y\widehat{A}(TY)\ch(F)\neq0.
	\end{equation}
	Denote by $\pr_2$ the projection from $N\times Y$ to the second factor. We consider the Hermitian vector bundle $\big(\pr_2^*F,\pr_2^*\nabla^F\big)$ over $N\times Y$, and regard it as the twisted part of the spinor bundle. Its curvature, when restricting to each fiber, still has $L^{\infty}$-norm not greater than $\varepsilon$. Moreover, we have
	\begin{equation}
		\ch\left(\Ind\left(D^{\pr_2^*F}\right)\right)=\left(\int_Y\widehat{A}(TY)\ch(F)\right)\cdot\ch\left(\Ind(D)\right)\neq 0.
	\end{equation}
	This implies that $N\times Y\to B$ has infinite family $\widehat{A}$-area.
\end{ex}

Now we prove Theorem \ref{MainTheorem}. Some analytic techniques used in the proof are inspired by \cite[Theorem 1.1]{Yu2024}.

\begin{proof}[Proof of Theorem \ref{MainTheorem}]	
	Consider the cylinder $N\times\R$ and regard it as a natural fiber bundle over $B$. Let $M=N\times[-1,1]$ be viewed as a submanifold of $N\times\R$. Extend the metric $g$ to a complete metric $\bar{g}$ on $N\times\R$ such that:		
	\begin{itemize} 
		\item[$(a)$] $\bar{g}=g$ on $M$; 
		\item[$(b)$] $\bar{g}=g_N\oplus\dd t\otimes\dd t$ on $N\times(-\infty,-2]\bigcup N\times[2,+\infty)$, where $t$ is the parameter of $\R$ and $g_N$ is the restriction of $g$ to $N\times\{0\}$.
	\end{itemize}		
	Clearly, $\mathrm{dist}_{\bar{g}}(x,N\times\{-1\})\geqslant\mathrm{dist}_g(N\times\{-1\},N\times\{1\})$ for any $x\in N\times[1,+\infty)$.
	
	Let $\bar{g}_b$ be the restriction metric of $\bar{g}$ to the fiber over $b\in B$. Since $\bar{g}$ is a product metric outside a compact subset, the fiberwise scalar curvature $k_{\bar{g}_b}$ has a lower bound $\sigma'$, which may not be positive, and this bound can be chosen to be independent of $b$. For simplicity, in the remainder of the proof we set
	$$l\coloneq\mathrm{dist}_g\left(N\times\{-1\},N\times\{1\}\right).$$
	
	We define the signed distance function $\varphi\colon N\times\R\to\R$ by
	\begin{equation}
		\varphi(x)\coloneq 
		\begin{cases}
			-\mathrm{dist}_{\bar{g}}\{x,N\times\{-1\}\}, & x\in N\times(-\infty,-1], \\
			\mathrm{dist}_{\bar{g}}\{x,N\times\{-1\}\}, & x\in N\times[-1,+\infty).
		\end{cases}
	\end{equation}
	It is easy to see that $\varphi$ is a $1$-Lipschitz function.
	
	For every $\varepsilon>0$ (to be specified later), there exists a smooth approximation $\psi$ of $\varphi-\frac{l}{2}$ such that \cite[Theorem 1]{AFMR2007}
	\begin{itemize}
		\item[$(a)$] $\left|\psi(x)-\left(\varphi(x)-\frac{l}{2}\right)\right|\leqslant\frac{\varepsilon}{2},\quad\forall x\in N\times\R$;
		\item[$(b)$] $\psi$ is $(1+\varepsilon)$-Lipschitz.
	\end{itemize}
	Denote by $\psi_b$ the restriction of $\psi$ to $Z_b\times\R$. Then the property $(b)$ implies that $\psi_b$ remains to be a $(1+\varepsilon)$-Lipschitz function on $\left(Z_b\times\R,\bar{g}_b\right)$, and hence its gradient satisfies
	\begin{equation}\label{GradEstimate}
		\left|\grad_{\bar{g}_b}\psi_b\right|\leqslant1+\varepsilon.
	\end{equation}	
	
	We claim that $|\psi(x)| \leqslant \frac{l - \varepsilon}{2}$ implies $x \in M$. In fact,
	\begin{align*}
		|\psi(x)| \leqslant \frac{l - \varepsilon}{2} \quad & \Rightarrow \quad -\frac{l}{2} \leqslant \varphi(x) - \frac{l}{2} \leqslant \frac{l}{2} \\
		& \Rightarrow \quad 0 \leqslant \varphi(x) \leqslant l.
	\end{align*}
	Thus, $x$ belongs to $M$.
	
	After normalization, we may assume $\sigma=n(n-1)$. Our goal is to prove that $l \leqslant \frac{2\pi}{n}$. Suppose on the contrary that $l>\frac{2\pi}{n}$. Then there exist constants $\varepsilon>0$ and $\delta > 0$ such that
	\begin{equation}\label{Condition1}
		\frac{n^2}{4}-\frac{\pi^2(1+\varepsilon)^2}{l^2}>\delta
	\end{equation}
	and
	\begin{equation}\label{Condition2}
		\frac{n\sigma'}{4(n-1)}+\frac{\pi^2(1+\varepsilon)^2}{l^2}\tan^2\left(\frac{\pi(l-2\varepsilon)}{2l}\right)-\varepsilon(1+\varepsilon)>\delta.
	\end{equation}	
	
	Set
	\begin{equation}\label{DefOfr}
		r\coloneq\frac{\pi(1+\varepsilon)}{l}.
	\end{equation}
	We choose a smooth function $\xi\colon\R^2\to\R$ such that
	\begin{equation}\label{DefOfXi}
		\xi(x,y)\coloneq
		\begin{cases}
			\frac{y^2+r^2}{1+\varepsilon}, & |x|\leqslant\frac{l-2\varepsilon}{2}, \\
			\varepsilon, & |x|\geqslant\frac{l-\varepsilon}{2}.
		\end{cases}
	\end{equation}
	We also assume that $\xi$ satisfies the condition
	\begin{equation}\label{xiCondition1}
		0\leqslant(1+\varepsilon)\xi(x,y)\leqslant y^2+r^2,\quad\forall(x,y)\in\R^2,
	\end{equation}
	and that, for any fixed $y\in\R$, the function $\xi(x,y)$ is monotone increasing for $x\in\big[-\frac{l-\varepsilon}{2},-\frac{l-2\varepsilon}{2}\big]$ and monotone decreasing for $x\in\left[\frac{l-2\varepsilon}{2},\frac{l-\varepsilon}{2}\right]$.\footnote{To ensure this, we may need to choose $\varepsilon$ additionally smaller such that $\varepsilon<\frac{r^2}{1+\varepsilon}$.} As a consequence of the properties of $\xi$, we conclude that
	\begin{equation}\label{xiCondition2}
		\xi(x,y)\geqslant\varepsilon,\quad\forall(x,y)\in\R^2.
	\end{equation}
	
	Now we turn to the ordinary differential equation with initial condition
	\begin{equation}\label{ODE}
		\begin{cases}
			f'(s)=\xi(s,f(s)), \\
			f(0)=0.
		\end{cases}
	\end{equation}
	We prove the following lemma.
	\begin{lemma}\label{ODESolutionLem}
		The equation (\ref{ODE}) admits a smooth global solution $f$ on $\R$, satisfying the following properties:
		\begin{itemize}
			\item[$(a)$] $f$ is monotone increasing;
			\item[$(b)$] For $s\in\left[-\frac{l-2\varepsilon}{2},\frac{l-2\varepsilon}{2}\right]$,
			$$f(s)=r\tan\left(\frac{rs}{1+\varepsilon}\right)=r\tan\left(\frac{\pi s}{l}\right);$$
			\item[$(c)$] For $s\geqslant\frac{l-\varepsilon}{2}$, we have
			$$f(s)=\varepsilon s+c_1,$$
			and for $s \leqslant -\frac{l - \varepsilon}{2}$,
			$$f(s)=\varepsilon s+c_2,$$
			where $c_1$ and $c_2$ are constants.
		\end{itemize}
	\end{lemma}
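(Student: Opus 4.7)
The plan is to build $f$ piecewise according to the three regions singled out by the definition (\ref{DefOfXi}) of $\xi$. Since $\xi\in C^\infty(\R^2)$ is in particular locally Lipschitz in its second argument, the standard existence--uniqueness theorem for ODEs gives a unique smooth maximal solution of (\ref{ODE}) on some open interval $I\ni 0$; everything then reduces to showing $I=\R$ and identifying $f$ explicitly on each region.

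First, on the central interval $|s|\leqslant\frac{l-2\varepsilon}{2}$ the equation reads $f'(s)=\frac{f(s)^2+r^2}{1+\varepsilon}$, which is separable and solved by $r\tan\!\left(\frac{rs}{1+\varepsilon}\right)=r\tan\!\left(\frac{\pi s}{l}\right)$; the second equality uses the definition (\ref{DefOfr}) of $r$, and this function visibly satisfies $f(0)=0$. Because $|s|\leqslant\frac{l-2\varepsilon}{2}$ keeps the argument of the tangent strictly below $\pi/2$ in absolute value, the candidate is smooth and finite on the whole closed central interval, and uniqueness then forces $f$ to coincide with it there. This establishes $(b)$.

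Next I extend across the transition regions. Working first on $s>0$, the bound (\ref{xiCondition1}) yields the differential inequality $f'(s)\leqslant\frac{f(s)^2+r^2}{1+\varepsilon}$, and a standard comparison with the solution $g(s)=r\tan\!\left(\frac{\pi s}{l}\right)$ of the majorant equation $g'=\frac{g^2+r^2}{1+\varepsilon}$ starting from the matching value at $s=\frac{l-2\varepsilon}{2}$ gives $f(s)\leqslant g(s)$ as long as both are defined. Since $g$ remains finite on $\left[\frac{l-2\varepsilon}{2},\,l/2\right)$, in particular on the compact subinterval $\left[\frac{l-2\varepsilon}{2},\frac{l-\varepsilon}{2}\right]$, the solution $f$ cannot blow up before $s=\frac{l-\varepsilon}{2}$, so the maximal interval of existence extends at least that far. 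Once $s\geqslant\frac{l-\varepsilon}{2}$, the definition (\ref{DefOfXi}) forces $\xi(s,y)\equiv\varepsilon$ independent of $y$, reducing the equation to $f'(s)=\varepsilon$ and extending $f$ to all such $s$ as the affine function $\varepsilon s+c_1$. For $s<0$ I would observe that $\xi$ may be chosen to be even in each of its variables separately (manifest on the two explicit regions in (\ref{DefOfXi}) and compatible with the conditions imposed on the transition regions), so that $s\mapsto -f(-s)$ is another solution of (\ref{ODE}) and uniqueness forces $f$ to be odd; the positive-side analysis then transfers to give $f(s)=\varepsilon s+c_2$ for $s\leqslant-\frac{l-\varepsilon}{2}$, yielding $(c)$ and in particular $I=\R$.

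Finally, $(a)$ is immediate from (\ref{xiCondition2}): we have $f'(s)=\xi(s,f(s))\geqslant\varepsilon>0$ for every $s$, so $f$ is strictly increasing, and $C^\infty$ regularity on all of $\R$ follows from smoothness of $\xi$ by standard ODE regularity. The principal subtlety is the comparison step on the transition interval: one must verify that the differential inequality $f'\leqslant\phi(f)$ against the blow-up solution $g$ of $g'=\phi(g)$ with matching initial data really traps $f\leqslant g$ throughout the compact subinterval in question, which is where the numerical inequality $\frac{l-\varepsilon}{2}<l/2$ between the width of the transition region and the tangent's blow-up time becomes essential.
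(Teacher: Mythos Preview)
Your argument is correct and follows essentially the same route as the paper's: explicit tangent solution on the central interval, a comparison bound against $r\tan(\pi s/l)$ to rule out blow-up on the transition strip (together with the lower bound coming from $\xi\geqslant\varepsilon$, which you invoke for $(a)$ and should also cite here), linear continuation once $\xi\equiv\varepsilon$, and monotonicity and smoothness from the positivity and regularity of $\xi$. The one deviation worth flagging is your treatment of $s<0$: you shortcut via oddness after imposing that $\xi$ be even in each variable, whereas the paper simply repeats the positive-side argument; your move is legitimate in the ambient construction (where $\xi$ is ours to choose and the symmetric choice is compatible with all the stated constraints), but as a proof of the lemma \emph{for the already-fixed} $\xi$ it adds a hypothesis, so the safer course is the paper's direct repetition.
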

	\begin{proof}		
		First, it is straightforward to verify that the function
		\begin{equation}\label{LocalSolution}
			r\tan\left(\frac{rs}{1+\varepsilon}\right)=r\tan\left(\frac{\pi s}{l}\right)
		\end{equation}
		solves (\ref{ODE}) for $s \in \left[-\frac{l-2\varepsilon}{2},\frac{l-2\varepsilon}{2}\right]$. Next, we demonstrate that this function can be extended to $[0,+\infty)$ as a smooth solution of (\ref{ODE}).
		
		Consider the rectangle
		$$\left[-1,\frac{l-\varepsilon}{2}\right]\times\left[-1,r\tan\left(\frac{\pi(l-\varepsilon)}{2l}\right)\right]\subset\R^2,$$
		drawn with dashed lines in Figure \ref{ODEFig}. The theorem in \cite[Subsection II.7.I]{ODE} yields that there exists a solution $f\colon[0, \alpha]\to\R$ of (\ref{ODE}), where $\alpha\in\left(0,\frac{l-\varepsilon}{2}\right]$, which can be extended to the right such that its graph meets the boundary of the rectangle.
		Moreover, the comparison theorem \cite[Subsection II.9.IX]{ODE}, together with (\ref{xiCondition1}) and (\ref{xiCondition2}), implies
		\begin{equation}
			\varepsilon s\leqslant f(s)\leqslant r\tan\left(\frac{\pi s}{l}\right)
		\end{equation}
		on $[0, \alpha]$. Thus, the graph of $f$ must meet the right edge of the rectangle, as illustrated in Figure \ref{ODEFig}. In other words, we have $\alpha=\frac{l-\varepsilon}{2}$.
		
		\begin{figure}[hbtp]
			\centering
			\begin{tikzpicture}
				[
				declare function={
					f(\s, \r, \l) = \r * tan(pi * \s / \l);
				}
				]
				
				\pgfmathsetmacro{\r}{1}  
				\pgfmathsetmacro{\l}{4}  
				\pgfmathsetmacro{\e}{1/2}  
				
				\pgfmathsetmacro{\X}{(\l/2 - \e/2)}
				\pgfmathsetmacro{\Y}{(\r * tan(pi * deg(\l/2 - \e/2) / \l))}
				
				\begin{axis}[
					axis lines = middle,
					xlabel = $s$,
					ylabel = $y$,
					xmin = -3, xmax = 5,  
					ymin = -2.5, ymax = 6, 
					axis equal, 
					xtick = \empty,  
					ytick = \empty,  
					grid = major,
					grid style = {dashed, gray!30},
					width = 15cm,
					height = 10cm,
					domain = -2:2,         
					samples = 200,         
					restrict y to domain = -10:10,  
					]
					
					\addplot[thick] {f(deg(x), \r, \l)};			
					\addplot[thick] {1/3* x};
					
					\node[font=\small] at (2.1, 0.4) {$\varepsilon s$};
					\node[font=\small] at (2.7, \Y-1.5) {$r\tan\left(\frac{\pi s}{l}\right)$};		
					
					\draw[dashed] (-1,-1) -- (\X,-1);
					\draw[dashed] (\X,-1) -- (\X,\Y); 
					\draw[dashed] (-1,\Y) -- (\X,\Y);
					\draw[dashed] (-1,-1) -- (-1,\Y);			
					
					\node[circle, fill=black, inner sep=0.75pt, label=left:{\small $(-1,-1)$}] at (-1,-1) {};
					\node[circle, fill=black, inner sep=0.75pt, label=right:{\small $\left(\frac{l-\varepsilon}{2},-1\right)$}] at (\X,-1) {};
					\node[circle, fill=black, inner sep=0.75pt, label=left:{\small $\left(-1,r\tan\left(\frac{\pi(l-\varepsilon)}{2l}\right)\right)$}] at (-1,\Y) {};
					\node[circle, fill=black, inner sep=0.75pt, label=right:{\small $\left(\frac{l-\varepsilon}{2},r\tan\left(\frac{\pi(l-\varepsilon)}{2l}\right)\right)$}] at (\X,\Y) {};			
				\end{axis}
			\end{tikzpicture}
			\caption{}\label{ODEFig}
		\end{figure}
		
		On the other hand, it is easy to see that the function $\bar{f}\colon\left[\frac{l-\varepsilon}{2},+\infty\right)\to\R$ defined by
		$$\bar{f}(s)\coloneq\varepsilon\left(s-\frac{l-\varepsilon}{2}\right)+f\left(\frac{l-\varepsilon}{2}\right)$$
		is a solution of $f'(s)=\xi(s,f(s))$ that passes through $\left(\frac{l-\varepsilon}{2},f\left(\frac{l-\varepsilon}{2}\right)\right)$. Thus we conclude from \cite[Subsection II.6.VI (b)]{ODE} that we can glue $f$ and $\bar{f}$ together to obtain a $C^1$ solution on $[0,+\infty)$, which we still denote by $f$.
		
		Since $\xi\equiv\varepsilon$ outside $\left[-\frac{l-\varepsilon}{2},\frac{l-\varepsilon}{2}\right]\times\R$, the theorem in \cite[Subsection II.6.III]{ODE} implies the uniqueness of the $C^1$ solution of (\ref{ODE}) on $[0,+\infty)$. It follows that
		$f$ coincides with the function defined in (\ref{LocalSolution}) on $\left[0,\frac{l-2\varepsilon}{2}\right]$. Moreover, we obtain the smoothness of $f$ from the smoothness of $\xi$ by the corollary in \cite[Subsection III.13.XI]{ODE}. This completes the proof of $(b)$ and $(c)$ in the lemma for $s\in[0,+\infty)$. The case for $s\in(-\infty,0]$ is similar and $f$ is monotone increasing follows directly from $\xi>0$ on $\R^2$. Now we complete the proof of the lemma.
	\end{proof}
	
	We come back to the proof of Theorem \ref{MainTheorem}.
	
	Let $\mE$ be an Hermitian vector bundle over $N$ equipped with an Hermitian connection $\nabla^{\mE}$. This construction yields a family of twisted Dirac operators $D^{\mE}=\big\{D^{E_b}\big\}_{b\in B}$ and an index bundle $\Ind\big(D^{\mE}\big)\in K^{n-1}(B)$, as stated at the beginning of this section.
	
	Denote the projection $N\times\R\to N$ by $\pr$. Then, we have a pullback bundle $\ov{\mE}\coloneq\pr^*\mE$ over $N\times\R$, equipped with the pullback metric and pullback connection. The fiber of $\ov{\mE}$ over $b\in B$ is denoted by $\ov{E}_b$. Following the construction of Subsection \ref{Subsection1.2}, we can define the twisted spinor bundle $\ov{\mS}\otimes\ov{\mE}$, where the Clifford action of $\Cl\left(T^V(N\times\R)\right)$ on it is denoted by $\bar{c}(\cdot)$. If $n$ is odd, we choose $\ov{\mS}$ to satisfy $\bar{\tau}=-\Id$ as before. Additionally, we can define the corresponding family of twisted Dirac operators $\ov{D}^{\ov{\mE}}=\left\{\ov{D}^{\ov{E}_b}\right\}_{b\in B}$, where $\ov{D}^{\ov{E}_b}$ acts on the bundle $\ov{S}_b\otimes\ov{E}_b$.
	
	The Lichnerowicz formula \cite{Lich1963} states that
	\begin{equation}\label{LichFormula}
		\left(\ov{D}^{\ov{E}_b}\right)^2=-\Delta^{\ov{S}_b\otimes\ov{E}_b}+\frac{k_{\bar{g}_b}}{4}+\mR^{\ov{E}_b}
	\end{equation}
	for each $b\in B$, where $\Delta^{\ov{S}_b\otimes\ov{E}_b}$ is the Bochner Laplacian associated with the connection $\nabla^{\ov{S}_b\otimes\ov{E}_b}$ defined analogously to $\nabla^{S_b\otimes E_b}$ in (\ref{TwistedDiracOp}) and
	\begin{equation}
		\mR^{\ov{E}_b}\coloneq\frac{1}{2}\sum_{k,l=1}^{n}\bar{c}(e_{b,k})\bar{c}(e_{b,l})R^{\ov{E}_b}\left(e_{b,k},e_{b,l}\right)
	\end{equation}
	in a local orthonormal frame $\{e_{b,1},\dots,e_{b,n}\}$ of $T(Z_b\times\R)$. By Cauchy-Schwarz inequality, we deduce that
	\begin{equation}
		\left\langle\ov{D}^{\ov{E}_b}u,\ov{D}^{\ov{E}_b}u\right\rangle\leqslant n\left\langle\nabla^{\ov{S}_b\otimes\ov{E}_b}u,\nabla^{\ov{S}_b\otimes\ov{E}_b}u\right\rangle
	\end{equation}
	holds pointwisely for any $u\in C_0^{\infty}(Z_b\times\R,\ov{S}_b\otimes\ov{E}_b)$. Integrating it and applying divergence theorem yields
	\begin{equation}\label{IntegralCauchy}
		\int_{Z_b\times\R}\Big|\ov{D}^{\ov{E}_b} u\Big|^2\,\dd\vol_{\bar{g}_b}\leqslant n\int_{Z_b\times\R}\left\langle-\Delta^{\ov{S}_b\otimes\ov{E}_b}u,u\right\rangle\,\dd\vol_{\bar{g}_b}.
	\end{equation}
	From (\ref{LichFormula}) and (\ref{IntegralCauchy}), we obtain the Friedrich's inequality \cite{Fried1980}
	\begin{equation}\label{FriedrichInequality}
		\int_{Z_b\times\R}\Big|\ov{D}^{\ov{E}_b} u\Big|^2\,\dd\vol_{\bar{g}_b}\geqslant \int_{Z_b\times\R}\left(\frac{n}{4(n-1)}k_{\bar{g}_b}|u|^2+\frac{n}{n-1}\left\langle\mR^{\ov{E}_b}u,u\right\rangle\right)\,\dd\vol_{\bar{g}_b},
	\end{equation}
	which holds for each $b\in B$.
	
	Note that $R^{\ov{E}_b}=\pr^*R^{E_b}$. Since $N$ has infinite family $\widehat{A}$-area, we can choose $\mE$ and $\nabla^{\mE}$ such that
	the index bundle $\Ind\big(D^{\mE}\big)\neq0\in K^{n-1}(B)$ and
	\begin{equation}\label{CurvatureCondition}
		\frac{n}{n-1}\left\|\mR^{\ov{E}_b}\right\|_{L^{\infty}}\leqslant\frac{\delta}{2}
	\end{equation}
	with respect to the metric $\bar{g}_b$ for all $b\in B$, where $\delta$ is the constant appearing in (\ref{Condition1}) and (\ref{Condition2}).
	
	The remainder of the argument proceeds according to the parity of $\dim Z$. We begin with the case where $\dim Z$ is even.
	
	Consider the family of first-order elliptic differential operators
	\begin{equation}
		P=\Big\{P_b\coloneq\ov{D}^{\ov{E}_b}+\sqrt{-1}f(\psi_b)\Id\Big\}_{b\in B},
	\end{equation}
	where $f$ is the global solution of (\ref{ODE}) obtained in Lemma \ref{ODESolutionLem}. We compute that
	\begin{equation}\label{Pb*Pb}
		P_b^*P_b=\left(\ov{D}^{\ov{E}_b}\right)^2+f^2(\psi_b)\Id+\sqrt{-1}f'(\psi_b)c\left(\grad_{\bar{g}_b}\psi_b\right)
	\end{equation}
	and\begin{equation}\label{PbPb*}
		P_bP_b^*=\left(\ov{D}^{\ov{E}_b}\right)^2+f^2(\psi_b)\Id-\sqrt{-1}f'(\psi_b)c\left(\grad_{\bar{g}_b}\psi_b\right).
	\end{equation}
	From the construction of $\psi_b$ and $(c)$ in Lemma \ref{ODESolutionLem}, we know that outside a compact subset,
	\begin{equation}
		\left|f^2(\psi_b)\Id\pm \sqrt{-1}f'(\psi_b)c\left(\grad_{\bar{g}_b}\psi_b\right)\right|\geqslant \varepsilon^2\psi_b^2-c
	\end{equation}
	is uniformly positive, where $c$ is a constant. 
	
	Since $f$ has uniformly bounded derivative and $\psi$ is a Lipschitz function on $N\times\R$, we conclude that $f(\psi_b)\to f(\psi_{b_0})$ uniformly as $b\to b_0$ in a local trivialization of $N\times\R$ and $\ov{\mE}$. Therefore, we obtain an estimate analogous to \eqref{GraphEstimateFinal} for $P_b$. Since $\ov{\mS}\otimes\ov{\mE}$ has an almost product structure, the discussion in Subsection \ref{Subsection2.1} applies similarly, and we obtain an index bundle $\Ind(P)\in K^0(B)$.
	
	We first prove that $P_b$ is invertible for each $b$. As a consequence, the index bundle $\Ind(P)$ vanishes in $K^0(B)$.
	
	Since $f$ solves (\ref{ODE}), we conclude from (\ref{Pb*Pb}) that
	\begin{equation}\label{Pb*Pb2}
		P_b^*P_b=\left(\ov{D}^{\ov{E}_b}\right)^2+f^2(\psi_b)\Id+\sqrt{-1}\xi(\psi_b,f(\psi_b))c\left(\grad_{\bar{g}_b}\psi_b\right).
	\end{equation}
	Pick an arbitrary $u\in C_0^{\infty}(Z_b\times\R,\ov{S}_b\otimes\ov{E}_b)$. Then (\ref{GradEstimate}), (\ref{FriedrichInequality}), (\ref{CurvatureCondition}) and (\ref{Pb*Pb2}) yield
	{\small
		\begin{equation}\label{Int|Pbu|^2}
			\int_{Z_b\times\R}\left|P_bu\right|^2\,\dd\vol_{\bar{g}_b}\geqslant\int_{Z_b\times\R}\left(\frac{n}{4(n-1)}k_{\bar{g}_b}+f^2(\psi_b)-\xi(\psi_b,f(\psi_b))(1+\varepsilon)-\frac{\delta}{2}\right)|u|^2\,\dd\vol_{\bar{g}_b}.
		\end{equation}
	}
	
	On $Z_b\times[-1,1]$, the scalar curvature $k_{\bar{g}_b}\geqslant n(n-1)$. Together with (\ref{Condition1}), (\ref{DefOfr}) and (\ref{xiCondition1}), we conclude
	\begin{equation}\label{Fun>delta1}
		\frac{n}{4(n-1)}k_{\bar{g}_b}+f^2(\psi_b)-\xi(\psi_b,f(\psi_b))(1+\varepsilon)-\frac{\delta}{2} \geqslant \frac{n^2}{4}-r^2-\frac{\delta}{2} >\frac{\delta}{2}.
	\end{equation}
	
	Outside $Z_b\times[-1,1]$, the function $\psi_b$ satisfies $|\psi_b|>\frac{l-\varepsilon}{2}$. Thus, $\xi(\psi_b,f(\psi_b))=\varepsilon$ by (\ref{DefOfXi}). Moreover, $(a)$ and $(b)$ in Lemma \ref{ODESolutionLem} yield
	\begin{equation}\label{fPsib>}
		f^2(\psi_b)>f^2\left(\frac{l-2\varepsilon}{2}\right) =r^2\tan^2\left(\frac{\pi(l-2\varepsilon)}{2l}\right).
	\end{equation}
	In this case, we obtain via (\ref{Condition2}), (\ref{DefOfr}) and (\ref{fPsib>}) that
    \begin{equation}\label{Fun>delta2}
        \begin{aligned}
            &\frac{n}{4(n-1)}k_{\bar{g}_b}+f^2(\psi_b)-\xi(\psi_b,f(\psi_b))(1+\varepsilon)-\frac{\delta}{2} \\
		    > & \frac{n\sigma'}{4(n-1)}+r^2\tan^2\left(\frac{\pi(l-2\varepsilon)}{2l}\right)-\varepsilon(1+\varepsilon)-\frac{\delta}{2} \\
		    > & \frac{\delta}{2}.
        \end{aligned}
    \end{equation}
	
	Combining (\ref{Fun>delta1}) and (\ref{Fun>delta2}) with (\ref{Int|Pbu|^2}), we finally obtain
	\begin{equation}
		\int_{Z_b\times\R}\left|P_bu\right|^2\,\dd\vol_{\bar{g}_b}\geqslant\frac{\delta}{2}\int_{Z_b\times\R}|u|^2\,\dd\vol_{\bar{g}_b}.
	\end{equation}
	Therefore, $\Ker(P_b)=\{0\}$. A similar argument yields $\Ker(P_b^*)=\{0\}$. Hence, the operator $\frac{P_b}{\sqrt{\Id+P_b^*P_b}}$ is invertible for each $b$. We have thus proven that $\Ind(P)=0$.
	
	On the other hand, denote
    \begin{equation}
        Q_b(s)\coloneq\ov{D}^{\ov{E}_b}+\sqrt{-1}\left(sf(\psi_b)+(1-s)t\right)\Id
    \end{equation}
	for $s\in[0,1]$, where $t$ is the restriction of the projection $N\times\R\to\R$ to $Z_b\times\R$. It is straightforward to verify that for each $s\in[0,1]$, the family $\Big\{\frac{Q_b(s)}{\sqrt{\Id+Q_b(s)^*Q_b(s)}}\Big\}_{b\in B}$ is a continuous family of Fredholm operators, and that it varies continuously with respect to $s$. Thus we obtain a continuous family of index bundles $\{\Ind(Q(s))\}_{s\in[0,1]}\subset K^0(B)$. It follows that
	\begin{equation}\label{IndexBundleEq}
		\Ind(P)=\Ind(Q(1))=\Ind(Q(0))=\Ind\Big(\ov{D}^{\ov{\mE}}+\sqrt{-1}t\Id\Big).
	\end{equation}
	However, Theorem \ref{FamilyProductFormula2} states that
	$$\Ind\Big(\ov{D}^{\ov{\mE}}+\sqrt{-1}t\Id\Big)=\Ind\left(D^{\mE}_+\right),$$
	which by assumption is nonzero. This contradicts $\Ind(P)=0$. Therefore, we must have $l\leqslant\frac{2\pi}{n}$.
	
	We now turn to the case where $Z$ is odd-dimensional.
	
	In this situation, we set
	\begin{equation}
		\widetilde{P}=\left\{\widetilde{P}_b\coloneq\ov{D}^{\ov{E}_b}+f(\psi_b)\bar{\tau}\right\}_{b\in B},
	\end{equation}
	which is a smooth family of formally self-adjoint first-order elliptic differential operators. A direct computation shows that
	\begin{equation}\label{WidetildePb^2}
		\left(\widetilde{P}_b\right)^2=\left(\ov{D}^{\ov{E}_b}\right)^2+f^2(\psi_b)\Id+f'(\psi_b)c\left(\grad_{\bar{g}_b}\psi_b\right)\bar{\tau}.
	\end{equation}
	As in the even-dimensional case, the family $\widetilde{P}$ defines an index bundle $\Ind\big(\widetilde{P}\big)\in K^1(B)$. Moreover, by an estimate analogous to that in the even-dimensional case, one can show that each $\widetilde{P}_b$ is invertible. Consequently, the operators $\Id\cos\pi\theta\pm\sqrt{-1}\widetilde{P}\sin\pi\theta$ are invertible for all $\theta\in[0,1]$ and $b\in B$. Therefore, $\Ind\big(\widetilde{P}\big)=0$.
	
	On the other hand, consider the family
	\begin{equation}
		\widetilde{Q}_b(s)\coloneq\ov{D}^{\ov{E}_b}+\left(sf(\psi_b)+(1-s)t\right)\bar{\tau}
	\end{equation}
	for $s\in[0,1]$. Analogous to (\ref{IndexBundleEq}), we have
	\begin{equation}
		\Ind\left(\widetilde{P}\right)=\Ind\left(\widetilde{Q}(1)\right)=\Ind\left(\widetilde{Q}(0)\right)= \Ind\left(\ov{D}^{\ov{\mE}}+t\bar{\tau}\right).
	\end{equation}
	However, by Theorem \ref{FamilyProductFormula2'} and our assumption, this index is nonzero, leading to a contradiction. We therefore conclude that $l\leqslant\frac{2\pi}{n}$.
	
	Having addressed both cases when $Z$ is of even and odd dimension, we have thus completed the proof of Theorem \ref{MainTheorem}.
\end{proof}

\appendix
\section{Proofs of Lemma \ref{IndBunLem} and Lemma \ref{IndBunLem'}}
	
\begin{proof}[Proof of Lemma \ref{IndBunLem}]
	Recall that for each $b\in B$ we have a commutative diagram as shown in Figure \ref{IndBunLemFig1} (also refer to the middle square of the diagram in Figure \ref{IndBunLemFig2}), and two induced isomorphisms $\mu_b\colon\Ker P_b\to\Ker Q_b$ and $\eta_b\colon\Coker P_b\to\Coker Q_b$ via diagram chasing.
		
	Note that $\mu_b$ is simply the restriction $\varphi_b\big|_{\Ker P_b}$. If we denote by $p\colon H_{2,b}'\to\Image Q_b$ and $p^\perp\colon H_{2,b}'\to(\Image Q_b)^\perp$ the orthogonal projections, then $\eta_b$ can be identified with the composition
	\begin{equation}
		\eta_b=p^\perp\circ\psi_b\big|_{(\Image P_b)^\perp}.
	\end{equation}
	See the commutative diagram in Figure \ref{IndBunLemFig2}.
		
	\begin{figure}[hbtp]
		\centering
		\begin{tikzcd}
			\Ker P_b \arrow[r, hook] \arrow[d, "\mu_b", "\rotatebox{90}{$\sim$}"']
			&H_{1,b} \arrow[d, "\varphi_b"] \arrow[r, "P_b"]
			&H_{1,b}' \arrow[d, "\psi_b"] \arrow[r, two heads]
			&\Coker P_b\cong(\Image P_b)^{\perp} \arrow[d, "\eta_b", "\rotatebox{90}{$\sim$}"', xshift=5ex]
			\\
			\Ker Q_b \arrow[r, hook]
			&H_{2,b} \arrow[r, "Q_b"]
			&H_{2,b}' \arrow[r, two heads]
			&\Coker Q_b\cong(\Image Q_b)^{\perp}
		\end{tikzcd}
		\caption{}\label{IndBunLemFig2}
	\end{figure}
		
	By Atiyah and Singer's construction \cite[Proposition 2.2]{ASIV}, there exist $l\in\N$ and global sections $\{w_{1,1}',\dots,w_{1,l}'\}$ of $\mH_1'\to B$, such that the operator
	\begin{align*}
		\widetilde{P}_b\colon\quad H_{1,b}\oplus\C^l\quad\, & \to\qquad\quad H_{1,b}' \\
		(u_{1,b};z_1,\dots,z_l) & \mapsto P_b(u_{1,b})+\sum_{k=1}^{l}z_kw_{1,k}'(b)
	\end{align*}
	is surjective for each $b\in B$. Note that $\big\{\widetilde{P}_b\big\}_{b\in B}$ remains to be a continuous family of Fredholm operators. Consequently, $\Ker \widetilde{P}_b$ has finite constant rank and forms a vector bundle $\Ker\widetilde{P}$ over $B$. The index bundle $\Ind(P)$ is then defined by
	\begin{equation}\label{IndBunLemIndP}
		\Ind(P)=\left[\Ker\widetilde{P}\right]-\left[\C^l\right],
	\end{equation}
	where $\left[\C^l\right]$ denotes the element in $K^0(B)$ represented by the trivial bundle of rank $l$.
		
	Define the operator
	\begin{align*}
		\widetilde{Q}_b\colon\quad H_{2,b}\oplus\C^l\quad\, & \to\qquad\qquad H_{2,b}' \\
		(v_{2,b};z_1,\dots,z_l) & \mapsto Q_b(v_{2,b})+\sum_{k=1}^{l}z_k\psi_b\left(w_{1,k}'(b)\right).
	\end{align*}
	We first prove $\widetilde{Q}_b$ is surjective. For any $v_{2,b}'\in(\Image Q_b)^{\perp}$. The surjectivity of $\eta_b$ guarantees the existence of $u_{1,b}'\in(\Image P_b)^{\perp}$ satisfying $\eta_b(u_{1,b}')=v_{2,b}'$, i.e.
	\begin{equation}
		p^{\perp}\circ\psi_b(u_{1,b}')=v_{2,b}'.
	\end{equation}
	Thus $v_{2,b}'-\psi_b(u_{1,b}')\in\Image Q_b$, and we can set
	\begin{equation}\label{Qbv2b}
		v_{2,b}'-\psi_b(u_{1,b}')=Q_b(v_{2,b}).
	\end{equation}
	Since $\widetilde{P}_b$ is surjective, there exists $(u_{1,b};z_1,\dots,z_l)\in H_{1,b}\oplus\C^l$ such that
	\begin{equation}\label{u1b'}
		u_{1,b}'=P_b(u_{1,b})+\sum_{k=1}^{l}z_kw_{1,k}'(b).
	\end{equation}
	Substituting (\ref{u1b'}) into (\ref{Qbv2b}), we obtain
	\begin{equation}	
		\begin{aligned}
			v_{2,b}' & =\psi_b\circ P_b(u_{1,b})+\sum_{k=1}^{l}z_k\psi_b\left(w_{1,k}'(b)\right)+Q_b(v_{2,b}) \\
			& =\widetilde{Q}_b\left(\varphi_b(u_{1,b})+v_{2,b};z_1,\dots,z_l\right) \\
			& \in\Image\widetilde{Q}_b,
		\end{aligned}
	\end{equation}
	i.e., $(\Image Q_b)^{\perp}\subseteq\Image\widetilde{Q}_b$. Furthermore, obviously $\Image Q_b$ is contained in $\Image\widetilde{Q}_b$. This proves the surjectivity of $\widetilde{Q}_b$ and consequently we obtain, again by \cite[Proposition 2.2]{ASIV}, that
	\begin{equation}\label{IndBunLemIndQ}
		\Ind(Q)=\left[\Ker\widetilde{Q}\right]-\left[\C^l\right].
	\end{equation}
		
	Denote the map $(\varphi_b,\Id)\colon H_{1,b}\oplus\C^l\to H_{2,b}\oplus\C^l$ by $\varphi_b'$. A direct verification shows that $\widetilde{Q}_b\circ\varphi_b'=\psi_b\circ\widetilde{P}_b$, hence $\varphi_b'$ induces $\mu_b'\coloneq\varphi_b'\big|_{\Ker\widetilde{P}_b}\colon\Ker\widetilde{P}_b\to\Ker\widetilde{Q}_b$ via diagram chasing. We obtain a commutative diagram shown in Figure \ref{IndBunLemFig3}.
		
	\begin{figure}[hbtp]
		\centering
		\begin{tikzcd}[row sep=large, column sep=large]
			\Ker\widetilde{P}_b \arrow[r, hook] \arrow[d, "\mu_b'"']
			&H_{1,b}\oplus\C^l \arrow[d, "\varphi_b'"'] \arrow[r, two heads, "\widetilde{P}_b"]
			&H_{1,b}' \arrow[d, "\psi_b"]
			\\
			\Ker\widetilde{Q}_b \arrow[r, hook]
			&H_{2,b}\oplus\C^l \arrow[r, two heads, "\widetilde{Q}_b"]
			&H_{2,b}'
		\end{tikzcd}
		\caption{}\label{IndBunLemFig3}
	\end{figure}
		
	We aim to prove that $\mu_b'$ is an isomorphism. For injectivity, let $(u_{1,b};z_1,\dots,z_l)\in\Ker\mu_b'\subseteq\Ker\widetilde{P}_b$. From the equality
	\begin{equation}
		0=\mu_b'(u_{1,b};z_1,\dots,z_l)=\left(\varphi_b(u_{1,b});z_1,\dots,z_l\right),
	\end{equation}
	we deduce that $z_1=\cdots=z_l=0$ and $\varphi_b(u_{1,b})=0$. Then $\widetilde{P}_b(u_{1,b};z_1,\dots,z_l)=0$ yields $u_{1,b}\in\Ker P_b$. Note that $\mu_b=\varphi_b\big|_{\Ker P_b}$ is injective by assumption, hence $u_{1,b}=0$. This proves $\mu_b'$ is injective.
		
	We now prove the surjectivity of $\mu_b'$. Consider an arbitrary element $(v_{2,b};z_1,\dots,z_l)\in\Ker\widetilde{Q}_b$, which by definition satisfies
	\begin{equation}\label{IndBunLemEq1}
		Q_b(v_{2,b})+\sum_{k=1}^{l}z_k\psi_b\left(w_{1,k}'(b)\right)=0.
	\end{equation}
	Thus $\psi_b\left(\sum\limits_{k=1}^{l}z_kw_{1,k}'(b)\right)\in\Image Q_b$. Suppose $\sum\limits_{k=1}^{l}z_kw_{1,k}'(b)=\bar{u}_{1,b}'+\tilde{u}_{1,b}'$, where $\bar{u}_{1,b}'\in\Image P_b$ and $\tilde{u}_{1,b}'\in(\Image P_b)^{\perp}$. Due to the commutativity of the middle square in Figure \ref{IndBunLemFig2}, $\psi_b$ maps $\Image P_b$ to $\Image Q_b$. Then
	\begin{equation}
		\psi_b\left(\tilde{u}_{1,b}'\right)=\psi_b\left(\sum\limits_{k=1}^{l}z_kw_{1,k}'(b)\right)-\psi_b\left(\bar{u}_{1,b}'\right)\in\Image Q_b.
	\end{equation}
	In other words, we have $p^{\perp}\circ\psi_b\big(\tilde{u}_{1,b}'\big)=0$. Since $\eta_b=p^{\perp}\circ\psi_b\big|_{(\Image P_b)^{\perp}}$ is injective, we conclude $\tilde{u}_{1,b}'=0$, proving $\sum\limits_{k=1}^{l}z_kw_{1,k}'(b)\in\Image P_b$. Set
	\begin{equation}\label{IndBunLemEq2}
		\sum_{k=1}^{l}z_kw_{1,k}'(b)=P_b\left(\bar{u}_{1,b}\right).
	\end{equation}
	Substituting (\ref{IndBunLemEq2}) into (\ref{IndBunLemEq1}) yields
	\begin{equation}
		Q_b(v_{2,b})+Q_b\circ\varphi_b(\bar{u}_{1,b})=0,
	\end{equation}
	or equivalently, $v_{2,b}+\varphi_b(\bar{u}_{1,b})\in\Ker Q_b$. Then due to the surjectivity of $\mu_b$, there exists $\tilde{u}_{1,b}\in\Ker P_b$ such that $\varphi_b(\tilde{u}_{1,b})=v_{2,b}+\varphi_b(\bar{u}_{1,b})$. As a result, we obtain
	\begin{equation}\label{IndBunLemEq3}
		(v_{2,b};z_1,\dots,z_l)=\varphi_b'(\tilde{u}_{1,b}-\bar{u}_{1,b};z_1,\dots,z_l).
	\end{equation}
	Moreover, from (\ref{IndBunLemEq2}) we compute
	\begin{equation}\label{IndBunLemEq4}
		\widetilde{P}_b(\tilde{u}_{1,b}-\bar{u}_{1,b};z_1,\dots,z_l)=-P_b(\bar{u}_{1,b})+\sum_{k=1}^{l}z_kw_{1,k}'(b)=0.
	\end{equation}
	It follows from (\ref{IndBunLemEq3}) and (\ref{IndBunLemEq4}) that $(v_{2,b};z_1,\dots,z_l)\in\Image\mu_b'$, completing the proof of the surjectivity of $\mu_b'$.
		
	In summary, we have shown that for each $b\in B$, the map $\mu_b'\colon\Ker\widetilde{P}_b\to\Ker\widetilde{Q}_b$ is an isomorphism. Note that the family $\mu'\coloneq\big\{\mu_b'\big\}_{b\in B}$ is just the restriction of the continuous family $\varphi'\coloneq\big\{\varphi_b'\big\}_{b\in B}$ of linear operators to the subbundle $\Ker\widetilde{P}$, hence yields a bundle isomorphism from $\Ker\widetilde{P}$ to $\Ker\widetilde{Q}$. Combining this with (\ref{IndBunLemIndP}) and (\ref{IndBunLemIndQ}), we finally complete the proof of this lemma.
\end{proof}
	
\begin{proof}[Proof of Lemma \ref{IndBunLem'}]
	First, motivated by \cite[Theorem B]{AS1969}, we consider the following diagram. For each $s \in [0,1]$, its commutativity follows immediately from that of Figure \ref{IndBunLem'Fig1}.
		
	\begin{figure}[hbtp]
		\centering
		\begin{tikzcd}[row sep=large, column sep=huge]
			H_{1,b} \arrow[d, "\varphi_b"'] \arrow[rr, "\Id\cos\pi s+\sqrt{-1}P_b\sin\pi s"]
			&&H_{1,b} \arrow[d, "\varphi_b"]
			\\
			H_{2,b} \arrow[rr, "\Id\cos\pi s+\sqrt{-1}Q_b\sin\pi s"]
			&&H_{2,b}
		\end{tikzcd}
		\caption{}\label{IndBunLem'Fig2}
	\end{figure}
		
	From the identity
	\begin{equation}
		\left(\Id\cos\pi s\mp\sqrt{-1}P_b\sin\pi s\right)\left(\Id\cos\pi s\pm\sqrt{-1}P_b\sin\pi s\right)=\Id\cos^2\pi s+P_b^2\sin^2\pi s,
	\end{equation}
	we see that $\Id\cos\pi s+\sqrt{-1}P_b\sin\pi s$ is invertible for $s\neq\frac{1}{2}$, with the same result holding when replacing $P_b$ by $Q_b$. When $s = \frac{1}{2}$, Figure \ref{IndBunLem'Fig2} reduces to Figure \ref{IndBunLem'Fig1} up to multiplication by the constant $\sqrt{-1}$.
		
	Since $P_b$ and $Q_b$ are self-adjoint, the induced map of $\varphi_b$ from $\Coker\big(\sqrt{-1}P_b\big)$ to $\Coker\big(\sqrt{-1}Q_b\big)$ by Figure \ref{IndBunLem'Fig2} at $s=\frac{1}{2}$ can be identified with $\mu_b\colon\Ker P_b\to\Ker Q_b$, which is an isomorphism. Consequently, for each $s\in[0,1]$, $\varphi_b$ induces isomorphisms:
	\begin{align*}
		\Ker\left(\Id\cos\pi s+\sqrt{-1}P_b\sin\pi s\right) & \to\Ker\left(\Id\cos\pi s+\sqrt{-1}Q_b\sin\pi s\right), \\
		\Coker\left(\Id\cos\pi s+\sqrt{-1}P_b\sin\pi s\right) & \to\Coker\left(\Id\cos\pi s+\sqrt{-1}Q_b\sin\pi s\right).
	\end{align*}

    As discussed in Subsection \ref{Subsection1.1}, we may regard
	$$\left\{\Id\cos\pi s+\sqrt{-1}P_b\sin\pi s\right\}_{s\in[0,1],b\in B},\quad \left\{\Id\cos\pi s+\sqrt{-1}Q_b\sin\pi s\right\}_{s\in[0,1],b\in B}$$
	as families of Fredholm operators on the suspension $\Sigma B$ of $B$. Applying Lemma \ref{IndBunLem} to the commutative diagram in Figure \ref{IndBunLem'Fig2} yields
	\begin{equation}\label{IndBunLem'IndPQ}
        \begin{aligned}
            &\Ind\left(\left\{\Id\cos\pi s+\sqrt{-1}P_b\sin\pi s\right\}_{s\in[0,1],b\in B}\right)= \\
            &\hspace{3cm}\Ind\left(\left\{\Id\cos\pi s+\sqrt{-1}Q_b\sin\pi s\right\}_{s\in[0,1],b\in B}\right)\in K^0(\Sigma B).
        \end{aligned}
	\end{equation}
    Then by definition, this implies $\Ind(P)=\Ind(Q)\in K^1(B)$. Now we complete the proof.
\end{proof}

\noindent{\bf Acknowledgments.}
The author would like to thank Professor Weiping Zhang for his guidance and many invaluable suggestions throughout this work. The author also thanks Professor Xiaonan Ma, Professor Guangxiang Su, and his colleague Hengyu Chen for many enlightening discussions. 

This work was partially supported by the National Key R\&D Program of China (No. 2024YFA1013202), NSFC Grant No. 11931007 and the Nankai Zhide Foundation.

\end{document}